\newcommand*\circled[1]{\tikz[baseline=(char.base)]{
            \node[shape=circle,draw,inner sep=2pt] (char) {#1};}}
\newtheorem{thm}{Theorem}[section]
\newtheorem{cor}[thm]{Corollary}
\newtheorem{lem}[thm]{Lemma}
\newtheorem{prop}[thm]{Proposition}
\newtheorem{conj}[thm]{Conjecture}
\theoremstyle{definition}
\theoremstyle{remark}
\newtheorem{rem}[thm]{Remark}
\numberwithin{equation}{section}
\begin{document}

\title{Guided by the primes - an exploration of very triangular numbers}
\author{Audrey Baumheckel}
\author{Tam\'as Forg\'acs}

\maketitle
\begin{quotation} \flushright{\textit{``Adding up the total of a love that's true / Multiply life by the power of two.''}} \\
\flushright{\textit{- Indigo Girls}}
\end{quotation}

\begin{abstract}
We present a string of results concerning very triangular numbers, paralleling fundamental theorems established for prime numbers throughout the history of their study, from Euclid ($\sim$300 BCE) to Green-Tao (2008) and beyond. 
\end{abstract}

\section{Introduction}
Our investigation was prompted by one of an anthology of essays that considers  a handful of engaging topics in mathematics \cite{essay}. The essay considers \textit{very triangular numbers}, which are triangular numbers whose binary representation contains a triangular number of 1s. While the formula $t_n=\frac{n(n+1)}{2}$ for the $n$th triangular number is known, to the best of our knowledge no explicit formula is known for the $n$th very triangular number. One can take the lack of such as a first parallel between the theory of very triangular numbers and the theory of prime numbers. To our delight, it turns out that there are several other results regarding prime numbers which have analogues in the theory of very triangular numbers. The ones considered in this paper (in the order they are presented) are: the infinitude of primes, arithmetic progressions and related results including the twin prime conjecture, and the existence of strings of consecutive numbers that either must, or do not contain primes. Following the sections containing our results, we conclude the paper with some open problems and potential further avenues of investigation.
In the remainder of the paper we shall use the the following notational conventions: $T$ denotes the set of triangular numbers; $VT$ denotes the set of very triangular numbers, and $|(n)_2^1|$ denotes the number of 1s in the binary representation of the positive integer $n$.

\section{The infinitude of very triangular numbers} One of the oldest results regarding the prime numbers concerns their infinitude. As is well known, Euclid's theorem (Proposition 20 in Book IX of the Elements) states that no finite list of prime numbers contains all primes.  Many of the subsequent proofs are similar to Euclid's in the sense that they establish the infinitude of primes in a non-formulaic way. There are however proofs (most recently in this \textsc{Monthly} by Elsholtz, see \cite{elsholtz}) that provide prime representing functions $f: \mathbb{N} \to \{ \textrm{prime numbers} \}$. \\
In priming the reader for further inquiry, Tanton \cite[p.217, Theorem]{essay} shows that there are infinitely many very triangular numbers, by noting that $|(t_{2^n+3})_2^1|=6 \in T$ for all $n \in \mathbb{N}$. One can readily extend his argument as follows.
\begin{thm}[Infinitude of $VT$-II] \label{thm:inftyVT} Suppose that $2(\ell+1)$ is an even triangular number, and let $n > 2\ell-1$. Then $\displaystyle{t_{2^n+2^{\ell}-1} \in VT}$. In particular, there are infinitely many very triangular numbers.
\end{thm}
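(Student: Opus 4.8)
The plan is to compute $t_{2^n+2^{\ell}-1}$ in closed form and then read off its binary expansion directly. Writing $m = 2^n + 2^{\ell} - 1$, I would expand
\[
t_m = \frac{m(m+1)}{2} = \frac{(2^n + 2^{\ell} - 1)(2^n + 2^{\ell})}{2} = 2^{2n-1} + 2^{n+\ell} + 2^{2\ell-1} - 2^{n-1} - 2^{\ell-1}.
\]
The three positive powers are interlaced with two subtracted powers, so the whole point is to understand how the subtractions redistribute the bits.

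Next I would group the terms into three blocks, $A = 2^{2\ell-1} - 2^{\ell-1}$, $B = 2^{n+\ell} - 2^{n-1}$, and $C = 2^{2n-1}$, whose sum is exactly $t_m$. Using the elementary identity $2^a - 2^b = \sum_{j=b}^{a-1} 2^j$ (valid for $a>b$), each of $A$ and $B$ becomes a run of consecutive $1$s: block $A$ sets bits $\ell-1$ through $2\ell-2$ (a run of length $\ell$), block $B$ sets bits $n-1$ through $n+\ell-1$ (a run of length $\ell+1$), and $C$ is the single bit $2n-1$.

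The key step, and the one place where the hypothesis is used, is checking that these three runs occupy pairwise disjoint bit positions, so that summing $A+B+C$ produces no carries and the number of $1$s is simply the sum of the three run lengths. The top bit of $A$ is $2\ell-2$ while the bottom bit of $B$ is $n-1$, and these runs are disjoint precisely when $2\ell-2 < n-1$, i.e.\ when $n > 2\ell-1$, which is exactly the standing assumption. The remaining separations ($B$ below $C$, and $A$ below $C$) follow from the implied inequality $n > \ell$. Hence $|(t_m)_2^1| = \ell + (\ell+1) + 1 = 2(\ell+1)$, which is a triangular number by hypothesis; since $t_m$ is itself triangular, $t_m \in VT$.

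Finally, for the infinitude claim I would fix one admissible value of $\ell$ (for instance $\ell=2$, so that $2(\ell+1)=6\in T$, recovering Tanton's family $t_{2^n+3}$) and let $n$ range over all integers exceeding $2\ell-1$. Since $t_{2^n+2^{\ell}-1}$ is strictly increasing in $n$, this produces infinitely many distinct very triangular numbers. I expect the only real obstacle to be bookkeeping: verifying the disjointness of the three bit-runs and confirming that the boundary case $n=2\ell$ (where $A$ and $B$ become adjacent rather than separated) still contributes no carry. Both of these reduce to the single inequality $n > 2\ell-1$, so I anticipate no genuine difficulty beyond careful indexing.
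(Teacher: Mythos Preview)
Your argument is correct and essentially identical to the paper's: both compute $t_{2^n+2^\ell-1}$ and exhibit its binary expansion as the single bit $2^{2n-1}$ together with the two runs $\sum_{j=n-1}^{n+\ell-1}2^j$ and $\sum_{j=\ell-1}^{2\ell-2}2^j$, then invoke $n>2\ell-1$ to show all $2(\ell+1)$ powers are distinct. The only difference is cosmetic---the paper factors $(2^n+2^\ell)/2=2^{n-1}+2^{\ell-1}$ before expanding the product, whereas you expand first and then rewrite $2^a-2^b$ as a geometric sum---but the resulting bit decomposition and the disjointness check are the same.
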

\begin{proof} Note that 
\begin{eqnarray*}
\frac{(2^n+\sum_{i=0}^{\ell-1} 2^i)(2^n+2^{\ell})}{2}&=&(2^n+\sum_{i=0}^{\ell-1} 2^i)(2^{n-1}+2^{\ell-1})\\
&=&2^{2n-1}+2^{n+\ell-1}+\sum_{i=0}^{\ell-1}2^{n-1+i}+\sum_{i=0}^{\ell-1}2^{\ell-1+i}.
\end{eqnarray*}
The restriction on $n$ ensures that each power of $2$ appearing in the last expression is distinct ($2n-1>n+\ell-1>n+\ell-2>\cdots>n-1>2\ell-2>\cdots>\ell-1$). Since there are exactly $2(\ell+1)$-many distinct powers, we conclude that 
\[
\left|\left(t_{2^n+2^{\ell}-1}\right)_2^1\right|=2(\ell+1) \in T,
\]
and consequently, $\displaystyle{t_{2^n+2^{\ell}-1} \in VT}$ as claimed.
\end{proof}
Theorem \ref{thm:inftyVT} shows that given any even triangular number $k$, there are infinitely many very triangular numbers with $k$ many 1s in their binary representation. Our next result shows that given \textit{any} triangular number $k$, there are very triangular numbers with $k$ many 1s in their binary representation.
\begin{thm}[Infinitude of $VT$-III]  \label{thm:twinVText} Let $k \in T$ be a triangular number. Then $t_{2^k-2^{\ell}} \in VT$ for all $0 \leq \ell \leq \frac{k}{2}$. Moreover, $|(t_{2^k-2^{\ell}})_2^1|=k$ for all $0 \leq \ell \leq \left \lfloor \frac{k}{2} \right \rfloor$.
\end{thm}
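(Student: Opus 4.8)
The plan is to compute the binary expansion of $t_{2^k-2^\ell}$ explicitly and simply count its $1$s, much as in the computation proving Theorem~\ref{thm:inftyVT}. Throughout I write $s=k-\ell$ and $a=2^s-1$, so that $a$ is the binary string $\underbrace{1\cdots1}_{s}$ of $s$ ones. The case $\ell=0$ I would dispose of first: here $2^k-2^\ell=2^k-1$ is odd, so
\[
t_{2^k-1}=\frac{(2^k-1)2^k}{2}=(2^k-1)\,2^{k-1},
\]
whose binary expansion is $k$ ones followed by $k-1$ zeros. Hence $|(t_{2^k-1})_2^1|=k\in T$ and $t_{2^k-1}\in VT$.

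For the main range $1\le \ell\le k/2$ I would first peel off the harmless powers of two. Since $2^k-2^\ell=2^\ell a$ is even,
\[
t_{2^k-2^\ell}=\frac{2^\ell a\,(2^\ell a+1)}{2}=2^{\ell-1}\bigl(2^\ell a^2+a\bigr),
\]
and multiplication by $2^{\ell-1}$ only shifts the expansion, so it suffices to count the $1$s of $M:=2^\ell a^2+a$. Expanding with $a=2^s-1$ and rewriting the exponents in terms of $k$ and $\ell$ gives
\[
M=2^{2k-\ell}-2^{k+1}+2^{k-\ell}+2^{\ell}-1.
\]

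The crux is to resolve the two subtractions into a genuine sum of distinct powers of two. I would group $M$ into a \emph{top block} $2^{2k-\ell}-2^{k+1}=2^{k+1}(2^{k-\ell-1}-1)$, a run of $k-\ell-1$ ones occupying positions $k+1,\dots,2k-\ell-1$, together with a \emph{bottom part} $2^{k-\ell}+2^{\ell}-1$. Because $\ell\le k/2$ we have $\ell\le k-\ell$, and a short check shows the bottom part always contributes exactly $\ell+1$ ones, with highest bit at position $k-\ell\le k$. These two ranges are disjoint, so no further carrying crosses between them and
\[
|(M)_2^1|=(k-\ell-1)+(\ell+1)=k.
\]
Since $k\in T$, this yields $|(t_{2^k-2^\ell})_2^1|=k$, and hence $t_{2^k-2^\ell}\in VT$, for every $0\le \ell\le \lfloor k/2\rfloor$; note that the integrality of $\ell$ makes the two stated ranges coincide.

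I expect the main obstacle to be the bookkeeping of the carries hidden in those two subtractions, specifically verifying that the bottom part $2^{k-\ell}+2^{\ell}-1$ contributes $\ell+1$ ones rather than $k-\ell+1$. This is exactly where the hypothesis $\ell\le k/2$ enters: when $\ell\le k-\ell$ the borrow induced by $-1$ is absorbed within the lower block $2^{\ell}-1$ without disturbing the bit at position $k-\ell$, whereas for $\ell>k/2$ the count changes and the conclusion fails (for instance $t_{2^6-2^4}=1176$ has only four $1$s). The boundary case $\ell=k/2$, where $k-\ell=\ell$ and the bottom part collapses to the single run $2^{\ell+1}-1$, I would check separately to confirm that the uniform count $\ell+1$ persists.
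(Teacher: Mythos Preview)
Your proposal is correct and follows essentially the same route as the paper: compute $t_{2^k-2^\ell}$ explicitly, resolve the subtractions into runs of consecutive powers of two, verify that the hypothesis $\ell\le k/2$ keeps the blocks disjoint, and count $k$ ones. The only organizational difference is that the paper dispatches $\ell=0,1$ by citing Theorem~\ref{thm:twinVT} and then runs the general computation for $2\le\ell\le\lfloor k/2\rfloor$, whereas you treat $\ell=0$ directly and absorb $\ell\ge 1$ into a single argument by first factoring out $2^{\ell-1}$; your explicit counterexample at $\ell>k/2$ is a nice addition that the paper does not include.
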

\begin{proof} Let $k \in T$ be given. The cases $i=0,1$ are treated in Theorem \ref{thm:twinVT}. Suppose now that $2 \leq \ell \leq \left \lfloor \frac{k}{2} \right \rfloor$. We compute
\begin{eqnarray*}
t_{2^k-2^{\ell}}&=&\frac{(2^k-2^{\ell})(2^k-2^{\ell}+1)}{2}=\frac{1}{2}\left(2^{k+\ell+1}(2^{k-\ell-1}-1)+2^{\ell}(2^{k-\ell}-1) +2^{2\ell}\right)\\
&=&\sum_{j=0}^{k-\ell-2}2^{k+\ell+j}+\sum_{i=0}^{k-\ell-1}2^{i+\ell-1} +2^{2\ell-1}\\
&=&\sum_{j=0}^{k-\ell-2}2^{k+\ell+j}+\sum_{i=0}^{\ell-1}2^{i+\ell-1}+\underbrace{\sum_{i=\ell}^{k-\ell-1}2^{i+\ell-1}+2^{2\ell-1}}\\
&=&\sum_{j=0}^{k-\ell-2}2^{k+\ell+j}+\sum_{i=0}^{\ell-1}2^{i+\ell-1}+\hspace{0.5 in}2^{k-1}.
\end{eqnarray*}
Given the restriction $2 \leq \ell \leq   \left \lfloor \frac{k}{2} \right \rfloor$ we see that all powers of $2$ in the last expression are distinct. Since there are $(k-\ell-1)+\ell+1=k$ many of these, we conclude that $|(t_{2^k-2^{\ell}})_2^1|=k$, as claimed.
\end{proof}

One may wonder whether given an odd triangular number, are there very triangular numbers with that many 1s in their binary representation, and if so, how many? Setting $\ell=0$ in Theorem \ref{thm:twinVText} shows that $|(t_{2^k-1})_2^1|=(2k-2)-(k-1)+1=k$, hence the answer to the first question in in the positive. The answer to the second question -- as presented in the following two propositions -- is perhaps somewhat surprising. While there are infinitely many very triangular numbers $vt\in VT$ with $|(vt)_2^1|=15, 21, 45, 55, \ldots $(all odd triangular numbers  greater than $3$), we are reasonably certain that there are only four very triangular numbers with three 1s in their binary representations.
\begin{prop} Let $\ell >1$ so that $2 \ell+1$ is a triangular number. There are infinitely many very triangular numbers $vt \in VT$ with $| (vt)_2^1|=2\ell+1$.
\end{prop}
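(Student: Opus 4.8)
The plan is to mirror the block-decomposition technique used in the proof of Theorem~\ref{thm:inftyVT}, but to shift the parity of the $1$-count from even to odd by subtracting $2$ rather than $1$ from the relevant power of two. Concretely, writing the given odd triangular number as $2\ell+1$, I propose to show that
\[
t_{2^n + 2^{\ell} - 2} \in VT \quad\text{for every } n > 2\ell - 1,
\]
each such number having exactly $2\ell+1$ ones in its binary expansion; since distinct $n$ yield distinct (indeed strictly increasing) values, this produces the desired infinite family.

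First I would expand, for $c = 2^\ell - 2$,
\[
t_{2^n + c} = \frac{(2^n+c)(2^n+c+1)}{2} = 2^{2n-1} + (2c+1)\,2^{n-1} + t_c,
\]
which exhibits the number as a sum of three blocks: a single top bit $2^{2n-1}$; the middle block $(2^{\ell+1}-3)\,2^{n-1}$; and the low block $t_{2^\ell - 2}$. The heart of the argument is a carry-free bookkeeping step: I would show that for $n > 2\ell - 1$ the bit-ranges occupied by these three blocks are pairwise disjoint, so that the total $1$-count is simply the sum of the three individual counts. This is exactly the role played by the chain of inequalities in Theorem~\ref{thm:inftyVT}; here the separation amounts to $2\ell - 2 < n-1$ together with $n + \ell - 1 < 2n - 1$, the first of which is the binding constraint.

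It remains to count ones in the middle and low blocks. For the middle block, $2c+1 = 2^{\ell+1} - 3$ has binary form $\underbrace{1\cdots1}_{\ell-1}01$, contributing $\ell$ ones. For the low block, I would compute $t_{2^\ell - 2} = (2^{\ell-1}-1)(2^\ell - 1)$ directly; resolving the single borrow shows its binary expansion has ones at positions $0$, $\ell-1$, and $\ell+1,\dots,2\ell-2$, again $\ell$ ones. This is the $\ell'=1$ instance of the computation in the proof of Theorem~\ref{thm:twinVText}, carried out without assuming the exponent $\ell$ is itself triangular, which in general it is not. Together with the top bit this gives $1 + \ell + \ell = 2\ell + 1$ ones, as required, and since $2\ell+1 \in T$ by hypothesis the number is very triangular.

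I expect the main (and essentially only) obstacle to be the disjointness verification, i.e.\ pinning down the precise threshold on $n$ that guarantees no carries occur between the three blocks; once the three bit-ranges are correctly identified this reduces to the elementary inequalities above. The one genuinely creative ingredient is the choice of the family itself---replacing the $2^\ell - 1$ of Theorem~\ref{thm:inftyVT} by $2^\ell - 2$---which is precisely what converts the even count $2(\ell+1)$ into the odd count $2\ell+1$.
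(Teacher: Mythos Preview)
Your argument is correct. The family $t_{2^n+2^\ell-2}$ with $n>2\ell-1$ does split cleanly into the three blocks you describe; the inequality $n>2\ell-1$ is exactly what forces the top bit of $t_{2^\ell-2}$ (at position $2\ell-2$) below the bottom bit of the middle block (at position $n-1$), and $n>\ell$ then follows automatically and separates the middle block from the top bit. Your counts of $\ell$ ones in each of the two nontrivial blocks check out.

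Your route differs from the paper's. The paper instead considers the single index $2^{2\ell}-2^\ell+1$ and verifies by direct expansion that $t_{2^{2\ell}-2^\ell+1}$ has exactly $2\ell+1$ ones. Your construction is a closer analogue of Theorem~\ref{thm:inftyVT}: you keep the free parameter $n$ and simply swap $2^\ell-1$ for $2^\ell-2$, which is precisely what flips the parity of the $1$-count from $2(\ell+1)$ to $2\ell+1$. The practical payoff is that your family is genuinely infinite---distinct $n$ give distinct very triangular numbers, all with the same $1$-count---so the ``infinitely many'' in the statement is established on the nose. The paper's computation, as written, exhibits only one example for each $\ell$ and does not by itself supply the infinitude; your version closes that gap while staying entirely within the block-decomposition framework already developed in the paper.
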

\begin{proof} Given $\ell$ as in the statement, consider the triangular numbers $t_{2^{2 \ell}-2^{\ell}+1}$. We compute
\begin{eqnarray*}
t_{2^{2 \ell}-2^{\ell}+1}&=&\frac{1}{2}\left(2^{2 \ell}-2^{\ell}+1 \right)\left(2^{2 \ell}-2^{\ell}+2 \right) \\
&=&2^{4 \ell-1}-2^{3 \ell}+2^{2\ell+1}-2^{\ell}-2^{\ell-1}+1 \\
&=&\sum_{i=0}^{\ell-2} 2^{3 \ell+i}+\sum_{i=0}^{\ell} 2^{\ell+i}-2^{\ell-1}+1 \\
&=&\sum_{i=0}^{\ell-2} 2^{3 \ell+i}+\sum_{i=1}^{\ell} 2^{\ell+i}+2^{\ell-1}+1.
\end{eqnarray*}
Given the restriction on $\ell$, the the powers of 2 in the above summands are all distinct, and there are $(\ell-1)+\ell+1+1=2\ell+1$ many of them.
\end{proof}
We complete this section with a discussion of triangular numbers with three 1s in their binary representations.
\begin{conj} \label{conj:no6ormore} If $n \in \mathbb{N}$ is such that $|(n)_2^1| \geq 6$, then $|(t_n)_2^1| \geq 4$.
\end{conj}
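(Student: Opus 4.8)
The plan is to argue by contraposition and to convert the statement into one about the binary weight of odd squares, where the machinery of exponential Diophantine equations becomes available. So I would assume $|(t_n)_2^1| \le 3$ and aim to show $|(n)_2^1| \le 5$. Two elementary observations drive the reduction. First, since $2n+1$ is obtained from $n$ by a left shift followed by appending a $1$, we have $|(2n+1)_2^1| = |(n)_2^1| + 1$. Second, the classical identity $8t_n + 1 = (2n+1)^2$, together with the fact that $8t_n$ ends in three $0$'s (as $t_n \ge 1$), gives $|((2n+1)^2)_2^1| = |(t_n)_2^1| + 1$. Writing $m = 2n+1$, the conjecture is therefore \emph{equivalent} to the assertion that every odd $m \ge 3$ with $|(m^2)_2^1| \le 4$ satisfies $|(m)_2^1| \le 6$.

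Next I would dispose of the low-weight cases and isolate the core. If $|(m^2)_2^1| \le 2$, then $m^2$ is either a power of $2$ or of the form $2^b(2^{a-b}+1)$; the only odd squares arising are $1$ and $9$, forcing $m \in \{1,3\}$ and hence $|(m)_2^1| \le 2$. This leaves the two genuinely exponential equations
\[
m^2 = 1 + 2^y + 2^z \qquad (0 < y < z)
\]
and
\[
m^2 = 1 + 2^x + 2^y + 2^z \qquad (0 < x < y < z),
\]
whose solution sets must be understood well enough to bound $|(m)_2^1|$. For the three-term equation one meets the infinite family $m = 2^k + 1$ (for which $|(m)_2^1| = 2$) alongside the sporadic solutions $m = 7$ and $m = 23$; a complete classification along the lines of Szalay's analysis of squares that are sums of three powers of two shows these are all, and each has $|(m)_2^1| \le 4$.

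The decisive step, and the main obstacle, is the complete resolution of the four-term equation $m^2 = 1 + 2^x + 2^y + 2^z$. A computer search turns up exactly four admissible $m$, namely $13, 15, 47,$ and $111$, with binary weights $3, 4, 5,$ and $6$; the last of these corresponds to $n = 55$, $t_{55} = 1540 = 2^{10} + 2^9 + 2^2$, and realizes the extremal value $|(n)_2^1| = 5$ that forces the threshold $6$ in the statement. Thus the substance of the conjecture is precisely the claim that \emph{no further} solutions exist. Finiteness follows in principle from the theory of $S$-unit equations (effective bounds via Baker's estimates for linear forms in logarithms, or the Subspace theorem), but converting such bounds into a rigorous, exhaustive verification that $13, 15, 47, 111$ are the only solutions is exactly where the difficulty lies. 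This is also why I expect the result to remain conjectural absent either a lucky elementary obstruction — for instance a congruence or $2$-adic valuation argument constraining the exponent triples $(x,y,z)$ — or a substantial computation bounding the largest admissible exponent $z$.
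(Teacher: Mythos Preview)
The statement you are addressing is labelled as a \emph{conjecture} in the paper and is not proved there. The authors report only a partial result in the concluding section: using the sort of case analysis on the summands of $n(n+1)$ exhibited in their proof of Proposition~2.5, they can handle $|(n)_2^1|=6$, and they explicitly write that they suspect this approach will not suffice for $|(n)_2^1|\ge 7$. There is therefore no ``paper's own proof'' to compare against.

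Your reduction is correct and is a substantial advance over the paper's framework. The two identities $|(2n+1)_2^1|=|(n)_2^1|+1$ and $(2n+1)^2=8t_n+1$ (whence $|((2n+1)^2)_2^1|=|(t_n)_2^1|+1$ for $t_n\ge 1$) do recast the conjecture as: every odd $m\ge 3$ with $s_2(m^2)\le 4$ satisfies $s_2(m)\le 6$. Your treatment of $s_2(m^2)\le 2$ is fine, and invoking Szalay's classification for $m^2=1+2^y+2^z$ is legitimate and disposes of the three-term case with room to spare. This buys far more than the paper's bit-level case analysis: instead of inducting on $|(n)_2^1|$, you reduce the entire conjecture to a single exponential Diophantine equation.

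Where you stop, however, is exactly where the problem is hard, and you say so yourself. The assertion that $m\in\{13,15,47,111\}$ exhausts the odd solutions of $m^2=1+2^x+2^y+2^z$ is numerically supported and is the precise content of the conjecture after your reduction, but you do not prove it. Finiteness via $S$-unit or Subspace methods is, as you note, ineffective or only effective with bounds too large to close by computation without additional work; the literature on perfect powers with four binary digits (Corvaja--Zannier, Bennett--Bugeaud--Mignotte, etc.) gives finiteness but, to my knowledge, no unconditional complete list. So your write-up should be read as a \emph{reformulation} of the conjecture, together with a resolution of the subordinate cases, rather than a proof. That is still more than the paper offers, but you should not present it as a proof of the conjecture.
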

\begin{prop} The only very triangular numbers $t_n$ with $|(n)_2^1| \leq 5$ and $|(t_n)_2^1|=3$ are $21, 28, 276$ and $1540$.
\end{prop}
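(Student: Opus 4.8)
The plan is to convert the digit condition into an arithmetic one and then run a finite case analysis organized by $k:=|(n)_2^1|$. First I would note that $|(t_n)_2^1|=3$ means precisely $t_n=2^a+2^b+2^c$ for some $a>b>c\ge 0$, equivalently $n(n+1)=2^{a+1}+2^{b+1}+2^{c+1}$; in other words, the product of consecutive integers $n(n+1)$ has exactly three $1$s in binary. So the problem becomes: find every $n$ with $|(n)_2^1|\le 5$ for which $n(n+1)$ carries exactly three $1$-bits.

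Next I would fix $k\in\{1,2,3,4,5\}$, write $n=\sum_{i=1}^{k}2^{e_i}$ with $e_1>e_2>\cdots>e_k\ge 0$, and expand $n(n+1)=n^2+n$ as the list of powers of two $\sum_i 2^{2e_i}+\sum_{i<j}2^{e_i+e_j+1}+\sum_i 2^{e_i}$, which has $2k+\binom{k}{2}$ terms counted with multiplicity. The case $k=1$ is immediate, since $t_{2^e}=2^{2e-1}+2^{e-1}$ (and $t_1=1$) never has three $1$s. For each of $k=2,3,4,5$ the question is exactly when the carries triggered by coincidences among the exponents $2e_i$, $e_i+e_j+1$ and $e_i$ collapse this list down to three surviving $1$-bits. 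I would therefore organize the argument by the pattern of equalities among those exponents, each such pattern imposing a small system of linear relations and inequalities on the $e_i$ whose finitely many solutions can be checked directly. The mechanism is visible in the examples: for $n=6=2^2+2^1$ the list $2^4,2^2,2^4,2^2,2^1$ of powers in $n(n+1)$ collapses, through the coincidences $2e_1=e_1+e_2+1$ and $2e_2=e_1$, to $2^5+2^3+2^1=42$, so that $t_n=21$ has exactly three $1$s.

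The device that keeps this finite — and the step I expect to be the main obstacle — is showing that the exponents $e_i$ can be neither too far apart nor too large. Intuitively, producing only three surviving $1$-bits forces the listed powers to collide and carry into one another, and two powers can only begin to interact when their exponents are equal; if the bits of $n$ were widely separated, or merely sitting at large absolute positions, then the high powers from $\sum_i 2^{2e_i}$ and the low powers from $\sum_i 2^{e_i}$ would fail to meet and $|(t_n)_2^1|$ would necessarily exceed $3$ (as already happens for $3\cdot 2^b$ once $b\ge 2$). Turning this intuition into explicit bounds on the $e_i$ for each $k$, and then verifying the finitely many surviving configurations, is where the real labor lies, since the lists for $k=4$ and $k=5$ contain $14$ and $20$ powers and hence many carry cascades to eliminate. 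Carrying this out, the only values that survive are $n=6,7,23,55$, giving $t_n=21,28,276,1540$ as claimed.
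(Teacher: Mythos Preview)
Your plan is essentially the paper's own approach: split by $k=|(n)_2^1|\in\{1,\dots,5\}$, expand $n(n+1)$ as a sum of the powers $2^{2e_i}$, $2^{e_i+e_j+1}$, $2^{e_i}$, and do a subcase analysis on the coincidences among these exponents to see when the carries leave exactly three surviving bits; the paper likewise only works out one value of $k$ in full (namely $k=3$) and records the answers for the others. One small organizational difference worth noting is that the paper does not first derive absolute bounds on the $e_i$ and then run a finite search, but instead shows in each exponent-relation subcase that either $|(n(n+1))_2^1|\ge 4$ holds for all admissible parameters or the relations already force a single specific $n$; this sidesteps the ``bounds on $e_i$'' step you flag as the main obstacle.
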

\begin{proof} The case of $|(n)_2^1|=1$ trivially yields $|(t_n)_2^1|=2 \neq 3$. The cases $|(n)_2^1|=2,3,4,5$ are all handled by case analysis. Lest we fall victim of paralysis by (sub)-case analysis, we only present here the case $|(n)_2^1|=3$, as this case is `big enough' to be representative of the others, and to indicate the general method of reasoning; yet small enough not to cause the reader any undue discomfort. Suppose thus that $|(n)_2^1|=3$, and write $n=2^{\sigma(1)}+2^{\sigma(2)}+2^{\sigma(3)}$ for some increasing function $\sigma: \{1,2,3\} \hookrightarrow \mathbb{N} \cup \{ 0\}$. We will need to distinguish between the cases when $\sigma(1)=0$ and when $\sigma(1)>0$. \\
Consider first the case $\sigma(1)\geq 1$, and the corresponding diagram (see Figure \ref{diag:sigma1=1}) representing the summands in $n(n+1)$, with solid arrows in the diagram pointing towards known larger quantities, and dashed ones between quantities we cannot compare without further assumptions.
\begin{figure}[ht]
\begin{center}
\begin{tikzcd}
2^{\sigma(1)} \arrow[d] \arrow[rd] \arrow[r, "+"]& \circled{$2^{\sigma(2)}$} \arrow[dl, leftrightarrow, dashed] \arrow[d] \arrow[rd] \arrow[r, "+"]& \circled{$2^{\sigma(3)}$} \arrow[dl, leftrightarrow, dashed] \arrow[d]\\ 
\textcolor{blue}{2^{2\sigma(1)}} \arrow[to=1-3, leftrightarrow, dashed] \arrow[d] \arrow[rd] \arrow[r, "+"] & \textcolor{magenta}{2^{\sigma(1)+\sigma(2)}}  \arrow[d] \arrow[rd] \arrow[r, "+"] &  \textcolor{brown}{2^{\sigma(1)+\sigma(3)}} \arrow[d]  \\ 
 \textcolor{magenta}{2^{\sigma(1)+\sigma(2)}} \arrow[d] \arrow[rd] \arrow[r, "+"] &  \textcolor{magenta}{2^{2\sigma(2)}} \arrow[d] \arrow[rd] \arrow[r, "+"]& \textcolor{brown}{2^{\sigma(2)+\sigma(3)}} \arrow[d] \\
\textcolor{brown}{2^{\sigma(1)+\sigma(3)}} \arrow[r, "+"]& \textcolor{brown}{2^{\sigma(2)+\sigma(3)}} \arrow[r, "+"]& \textcolor{brown}{2^{2\sigma(3)}}  \\
\end{tikzcd}
\caption{The case $|(n)_2^1|=3$ and $\sigma(1)\geq 1$.}
\label{diag:sigma1=1}
\end{center}
\end{figure}
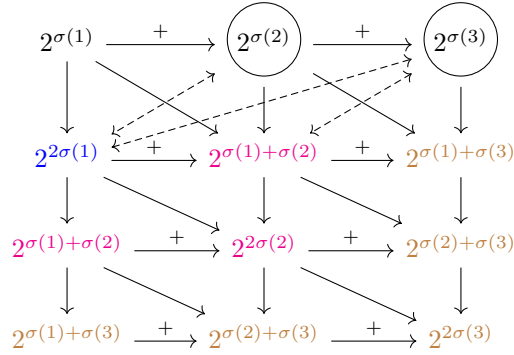

If $\sigma(2)>2\sigma(1)$, then the sum looks like $2+2^{2 \sigma(1)}+2^{\sigma(2)}+$ terms whose powers are all greater than $\sigma(2)$. As such, we conclude that $|(n(n+1))_2^1|\geq 4$.\\
If  $\sigma(2)=2 \sigma(1)$, then 
\begin{equation} \label{eq:n3one}
n(n+1)=2^{\sigma(1)}+2^{2 \sigma(1)+1}+2^{3 \sigma(1)+1}+2^{4 \sigma(1)}+ \ \textrm{terms with powers involving} \ \sigma(3).
\end{equation}
We now consider the subcases $\sigma(1)=1$ and $\sigma(1)>1$. In the former, we obtain 
\[
n(n+1)=2+2^3+2^5+ \textrm{terms with powers involving} \ \sigma(3).
\]
The smallest of the remaining powers is $\sigma(3) \geq 3$. If $\sigma(3)=4$ or $\sigma(3) \geq 6$, we trivially obtain that $|(n(n+1))_2^1| \geq4$. If $\sigma(3)=3$, we get $|(n(n+1))_2^1|=4$, while if $\sigma(3)=5$, we obtain $|(n(n+1))_2^1|=6$. If on the other hand $\sigma(1)>1$, then the four displayed powers of 2 in \eqref{eq:n3one} are all distinct. We must therefore have $\sigma(3)=2\sigma(1)+1$, or $\sigma(3)=3\sigma(1)+1$, as all other cases immediately yield $|(n(n+1))_2^1| \geq4$. Suppose first that $\sigma(3)=2\sigma(1)+1$. Then
\[
n(n+1)=2^{\sigma(1)}+2^{2 \sigma(1)+2}+2^{3 \sigma(1)+1}+2^{3 \sigma(1)+2}+2^{4 \sigma(1)+3}
\]
with all terms distinct, since $\sigma(1)>1$. Thus $|(n(n+1))_2^1|=5$. Finally, if $\sigma(3)=3\sigma(1)+1$, then 
\[
n(n+1)=2^{\sigma(1)}+2^{2 \sigma(1)+1}+2^{3 \sigma(1)+2}+2^{4 \sigma(1)}+2^{4 \sigma(1)+2}+2^{5 \sigma(1)+2}+2^{6 \sigma(1)+2},
\]
from which $|(n(n+1))_2^1| \geq 6$ readily follows.
\\
We now consider the case $\sigma(1)=0$. The corresponding diagram is given in Figure \ref{diag:sigma1=0}:

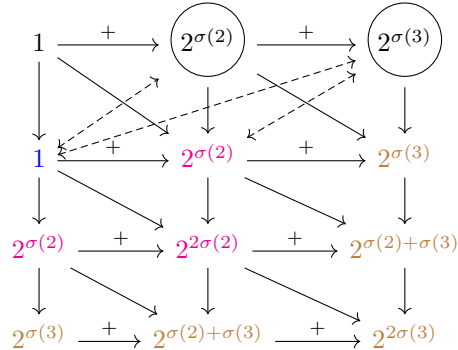
\begin{figure}[h]
\begin{center}
\begin{tikzcd}
1 \arrow[d] \arrow[rd] \arrow[r, "+"]& \circled{$2^{\sigma(2)}$} \arrow[d] \arrow[rd] \arrow[r, "+"]& \circled{$2^{\sigma(3)}$}  \arrow[d]\\ 
\textcolor{blue}{1} \arrow[ur, leftrightarrow, dashed] \arrow[to=1-3, leftrightarrow, dashed] \arrow[d] \arrow[rd] \arrow[r, "+"] & \textcolor{magenta}{2^{\sigma(2)}} \arrow [ur, leftrightarrow, dashed] \arrow[d] \arrow[rd] \arrow[r, "+"] &  \textcolor{brown}{2^{\sigma(3)}} \arrow[d]  \\ 
 \textcolor{magenta}{2^{\sigma(2)}}  \arrow[d] \arrow[rd] \arrow[r, "+"] &  \textcolor{magenta}{2^{2\sigma(2)}} \arrow[d] \arrow[rd] \arrow[r, "+"]& \textcolor{brown}{2^{\sigma(2)+\sigma(3)}} \arrow[d] \\
\textcolor{brown}{2^{\sigma(3)}} \arrow[r, "+"]& \textcolor{brown}{2^{\sigma(2)+\sigma(3)}} \arrow[r, "+"]& \textcolor{brown}{2^{2\sigma(3)}}  \\
\end{tikzcd}
\caption{The case $|(n)_2^1|=3$ and $\sigma(1)=0$.}
\label{diag:sigma1=0}
\end{center}
\end{figure}
We see that in this case
\[
(\dag) \quad n(n+1)=2+2^{\sigma(2)}+2^{\sigma(2)+1}+2^{2 \sigma(2)}+\ \textrm{terms with powers involving} \ \sigma(3).
\]
The only way for the first four summands not all to be distinct is if $\sigma(2)+1=2\sigma(2)$, or $\sigma(2)=1$. We rewrite
\[
 n(n+1)=2^2+2^3+\ \textrm{terms with powers involving} \ \sigma(3).
\]
If $\sigma(3)>3$, we obtain $|(n(n+1))_2^1| \geq 4$. If $\sigma(3)=3$, we get $n(n+1)=2^2+2^7$, hence $|(n(n+1))_2^1|=2$. Finally, if $\sigma(3)=2$, we arrive at $n(n+1)=2^3+2^4+2^5$, corresponding to the very triangular number $t_n=\frac{1}{2}n(n+1)=2^2+2^3+2^4=28$. \\
If the first four summands in $(\dag)$ are distinct, we must have $\sigma(3)=\sigma(2)+1$ if we want to reduce the number of summands. In this case however
\[
n(n+1)=2+2^{\sigma(2)}+2^{\sigma(2)+3}+2^{2 \sigma(2)}+2^{2 \sigma(2)+3},
\]
which means that $|(n(n+1))_2^1| \geq 4$ no matter what $\sigma(2)$ is. Since we have exhausted all the cases, we conclude that the only very triangular number $t_n$ with $|(n)_2^1|=3$ and $|(t_n)_2^1|=3$ is $t_7=28$. For the sake of completeness we present (without proof) the remaining very triangular numbers with three 1s in their binary representation: $|(n)_2^1|=2$ gives $t_6=21$; $|(n)_2^1|=4$ gives $t_{23}=276$, and $|(n)_2^1|=5$ yields $t_{55}=1540$. 
\end{proof}


\section{The twin very triangular number theorem and arithmetic progressions} The twin prime conjecture has occupied the minds of many great mathematicians since its formulation, and has seen some great progress towards a positive resolution in recent years. The analogous conjecture with respect to very triangular numbers states that there are infinitely many consecutive triangular numbers which are also very triangular. Fortunately, along with being easy to state, this result is also reasonably easy to prove. We begin with two preliminary results.
 \begin{lem}
Let $k \in \mathbb{N}$. Then $2^{2k-2}+2^{2k-3}+\cdots + 2^{k-1}$ is triangular number.
\end{lem}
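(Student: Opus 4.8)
The plan is to recognize the given sum as a finite geometric series and then identify it explicitly as a triangular number $t_m$ for a suitable index $m$. First I would put the sum into closed form: the expression $2^{2k-2}+2^{2k-3}+\cdots+2^{k-1}$ has exponents running over the $k$ consecutive integers from $k-1$ up to $2k-2$, so it is a geometric series with first term $2^{k-1}$, ratio $2$, and $k$ terms. Summing gives
\[
\sum_{i=k-1}^{2k-2} 2^i = 2^{k-1}\bigl(2^k-1\bigr).
\]

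Next I would read off the triangular index from this factored form. Since $t_m=\frac{m(m+1)}{2}$, the appearance of the factor $2^k-1$ suggests the choice $m=2^k-1$, for which
\[
t_{2^k-1}=\frac{(2^k-1)\,2^k}{2}=2^{k-1}\bigl(2^k-1\bigr),
\]
matching the closed form above exactly. Hence $2^{2k-2}+2^{2k-3}+\cdots+2^{k-1}=t_{2^k-1}\in T$, which is the desired conclusion.

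There is essentially no serious obstacle here: the only mild point of insight is guessing the correct index $m=2^k-1$, and this is forced once the sum is factored as $2^{k-1}(2^k-1)$, since the consecutive factors $2^k-1$ and $2^k$ are precisely the pair $m,\,m+1$ occurring in $t_m=\frac{m(m+1)}{2}$. As a sanity check one may also note that $2^{k-1}(2^k-1)=2^{2k-1}-2^{k-1}$ is, in binary, exactly a block of $k$ consecutive $1$s in bit positions $k-1$ through $2k-2$, so it carries $k$ many $1$s; this is consistent with the $\ell=0$ instance of Theorem \ref{thm:twinVText}.
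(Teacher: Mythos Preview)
Your proof is correct and is essentially the same computation the paper relies on: the paper states the lemma without proof, but in the proof of Theorem~\ref{thm:twinVT} it carries out exactly your calculation, writing $t_{2^k-1}=\frac{(2^k-1)2^k}{2}=\sum_{i=k-1}^{2k-2}2^i$. Your identification of the index $m=2^k-1$ and the geometric-series summation match this precisely.
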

\begin{cor} \label{cor:infinitemanyVT}
Any number of the form $2^{2k-2}+2^{2k-3}+\cdots + 2^{k-1}, k \in T$ is very triangular.
\end{cor}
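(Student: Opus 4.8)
The plan is to read off both defining properties of a very triangular number directly from the displayed sum. Recall that $vt \in VT$ means both $vt \in T$ and $|(vt)_2^1| \in T$. Writing $N_k := 2^{2k-2}+2^{2k-3}+\cdots+2^{k-1}$, I must verify that $N_k$ is triangular and that the number of 1s in its binary representation is itself triangular.

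Triangularity is handed to me for free by the preceding Lemma, so I would simply cite it (concretely, $N_k = 2^{k-1}(2^k-1) = t_{2^k-1}$). For the digit count, I would observe that the exponents occurring in $N_k$ are the consecutive integers $k-1, k, \ldots, 2k-2$, a block of length $(2k-2)-(k-1)+1 = k$. Since these powers are distinct and each appears exactly once, the displayed expression \emph{is} the binary expansion of $N_k$, whence $|(N_k)_2^1| = k$.

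It then remains only to invoke the hypothesis of the corollary, namely $k \in T$: this is precisely the assertion that the number of 1s, being $k$, is a triangular number. Combining the two observations, $N_k$ is a triangular number whose binary representation contains a triangular number of 1s, so $N_k \in VT$ as claimed.

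I expect no genuine obstacle: the statement is a one-line consequence of the Lemma together with the remark that a sum of $k$ consecutive powers of $2$ has exactly $k$ binary 1s. The only point deserving a moment's care is the boundary case $k=1$, where the sum collapses to the single term $2^0 = 1 = t_1$ with one 1; since $1 \in T$ this is consistent, and for every $k \geq 1$ the smallest exponent $k-1$ is nonnegative, so the expression is a legitimate binary representation throughout.
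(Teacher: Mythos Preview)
Your proposal is correct and matches the paper's intended argument exactly: the paper states the corollary without an explicit proof, relying on the preceding Lemma for triangularity together with the evident observation that the $k$ consecutive, distinct powers of $2$ contribute precisely $k$ ones to the binary expansion. Your treatment of the boundary case $k=1$ is a nice bit of extra care, but nothing further is needed.
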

\begin{rem}
\begin{itemize}
    \item[(i)] Corollary \ref{cor:infinitemanyVT} provides an alternative proof of the Theorem on page 217 of \cite{essay} establishing the existence of infinitely many very triangular numbers.
    \item[(ii)] We also see that there is at least one very triangular number with $k$ many 1s in its binary representation for every $k \in T$.
    \item[(iii)] Not every very triangular number is of this form. For example, $(21)_2=10101$.
\end{itemize}
\end{rem}

\begin{thm}[The twin very triangular number theorem]  \label{thm:twinVT} There are infinitely many integers $n \in \mathbb{N}$ such that $t_n, t_{n+1} \in VT$.
\end{thm}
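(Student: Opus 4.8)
The plan is to exhibit an explicit infinite family of indices $n$ for which both $t_n$ and $t_{n+1}$ are simultaneously very triangular. Since Theorem~\ref{thm:twinVText} already tells us that $|(t_{2^k-2^\ell})_2^1|=k$ for every triangular $k$ and every $0\leq\ell\leq\lfloor k/2\rfloor$, the natural move is to look at pairs of indices of the form $2^k-2^\ell$ that happen to be consecutive. The cleanest choice is to take $\ell=0$ and $\ell=1$: the indices $2^k-2$ and $2^k-1$ differ by exactly $1$, so setting $n=2^k-2$ gives a candidate twin pair $t_{2^k-2}, t_{2^k-1}$. Because there are infinitely many triangular numbers $k$, this family is infinite, and the theorem follows once I verify that both members have a triangular number of $1$s.

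First I would record that, by Theorem~\ref{thm:twinVText} applied with $\ell=1$ and with $\ell=0$ respectively, $|(t_{2^k-2})_2^1|=k$ and $|(t_{2^k-1})_2^1|=k$ for every triangular $k$ (the $\ell=0,1$ cases being exactly the base cases referenced in that theorem's proof). Since $k\in T$ by hypothesis, both $t_{2^k-2}$ and $t_{2^k-1}$ have a triangular number of ones in binary, hence both lie in $VT$. Thus for each triangular $k$ the single index $n=2^k-2$ produces a twin very triangular pair, and letting $k$ range over the infinitely many triangular numbers yields infinitely many such $n$. To make the argument self-contained, I would also include the short direct binary expansions: one checks that $t_{2^k-1}=\sum_{i=0}^{k-2}2^{k-1+i}+\sum_{i=0}^{k-2}2^{i}+2^{k-1}$ type decompositions collapse to exactly $k$ distinct powers, matching the $\ell=0,1$ specializations.

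The only real subtlety is bookkeeping: I must confirm that the restriction $0\leq\ell\leq\lfloor k/2\rfloor$ in Theorem~\ref{thm:twinVText} is genuinely satisfied by both $\ell=0$ and $\ell=1$. For $\ell=0$ this is immediate, and for $\ell=1$ it requires only $\lfloor k/2\rfloor\geq 1$, i.e. $k\geq 2$; since the smallest triangular numbers relevant here are $k=3,6,10,\dots$, this holds for every triangular $k$ we use (one may harmlessly exclude the degenerate $k=1$). The main obstacle, such as it is, is therefore not mathematical depth but ensuring that the two consecutive indices both fall inside the stated range and that no power-of-two collision spoils the digit count at the boundary index $\ell=1$; both are dispatched by the explicit expansions already computed in Theorem~\ref{thm:twinVText}. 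Consequently the twin very triangular pairs $\bigl(t_{2^k-2},\,t_{2^k-1}\bigr)$, indexed by $k\in T$, establish the claim.
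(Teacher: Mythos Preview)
Your chosen family $(t_{2^k-2},\,t_{2^k-1})$ for triangular $k$ is exactly the family the paper uses, so the overall strategy is right. The problem is the justification: you invoke Theorem~\ref{thm:twinVText} at $\ell=0,1$, but look again at how that theorem is proved in the paper --- its first sentence is ``The cases $i=0,1$ are treated in Theorem~\ref{thm:twinVT}.'' In other words, Theorem~\ref{thm:twinVText} \emph{defers} precisely the two cases you need to the very result you are trying to establish. Citing it here is circular and proves nothing.

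You seem half-aware of this, since you add that you would ``include the short direct binary expansions'' to make things self-contained. But the expansion you sketch for $t_{2^k-1}$ is garbled (as written it has $2k-1$ summands and no indicated collapses), and you give no expansion at all for $t_{2^k-2}$. Those two direct computations are not optional garnish; they \emph{are} the proof. The clean versions are
\[
t_{2^k-1}=\frac{(2^k-1)2^k}{2}=(2^k-1)\,2^{k-1}=\sum_{i=k-1}^{2k-2}2^i,
\]
which visibly has $k$ ones, and then $t_{2^k-2}=t_{2^k-1}-(2^k-1)=\sum_{i=k+1}^{2k-2}2^i+2^{k-1}+1$, which again has $(k-2)+1+1=k$ ones. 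Once you actually write these down (for $k\in T$, $k>1$), the argument is complete --- and is then identical to the paper's.
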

\begin{proof} Let $k \in T$, $k>1$ be given. By Corollary \ref{cor:infinitemanyVT} 
\[
\underbrace{11\cdots 1}_{k}\underbrace{00 \cdots 0}_{k-1}=\sum_{i=k-1}^{2k-2} 2^i \in VT. 
\]
Let $n+1:=\underbrace{11\cdots 1}_{k}=\sum_{i=0}^{k-1}2^i=2^k-1$. Then
\[
t_{n+1}=\frac{(n+1)(n+2)}{2}=\frac{(2^k-1)2^k}{2}=\left(\sum_{i=0}^{k-1}2^i\right)2^{k-1}=\sum_{i=0}^{k-1}2^{k-1+i}=\sum_{i=k-1}^{2k-2}2^i.
\]
Since $\displaystyle{t_{n+1}=t_n+(n+1)}$, we see that $t_{n+1}-(n+1)=(\underbrace{11\cdots 1}_{k}\underbrace{00 \cdots 0}_{k-1})-(\underbrace{11\cdots 1}_{k}) \in T$. The reader will note that
\begin{eqnarray*}
\underbrace{11\cdots 1}_{k}\underbrace{00 \cdots 0}_{k-1}-\underbrace{11\cdots 1}_{k}&=&\sum_{i=k-1}^{2k-2}2^i-\sum_{i=0}^{k-1}2^i \\
&=&\sum_{i=k-1}^{2k-2}2^i-(2^k-1)\\
&=& \sum_{i=k+1}^{2k-2}2^i+2^{k-1}+1,
\end{eqnarray*}
and that
\[
\left| \left( \sum_{i=k+1}^{2k-2}2^i+2^{k-1}+1 \right)_2^1 \right|=(k-2)+2=k\in T.
\]
We conclude that $t_n \in VT$. Since $t_{n+1} \in VT$ as well, the proof is complete.
\end{proof}
\begin{rem} The following observations are in order:
\begin{itemize}
\item[(i)] The proof of Theorem \ref{thm:twinVT} shows that if $k \in T$, then $t_{2^k-2}, t_{2^k-1} \in VT$. Not all twin pairs of very triangular numbers are of this form, however, as demonstrated by the pair $(903,946)$. 
\item[(ii)] While the theorem exhibits twin very triangular numbers $t_n, t_{n+1}$ with the property that $|(t_n)_2^1|=|(t_{n+1})_2^1|=k$ for some $k \in T$, this property does not hold in general for twin pairs. For instance, $|(t_{175})_2^1|=10$, while $|(t_{176})_2^1|=6$.
\item[(ii)] These pairs cannot be `continued' into triples of consecutive very triangular numbers, since $t_{2^k} \notin VT$ for any $k \in \mathbb{N}$. 
\end{itemize}
\end{rem}

Theorem \ref{thm:twinVT} raises the question of just how long strings of consecutive triangular numbers one can have that are also very triangular. The reader can easily check that $|(t_{581})_2^1|=|(t_{582})_2^1|=|(t_{583})_2^1|=10$ and that$|(t_{1702})_2^1|=|(t_{1703})_2^1|=|(t_{1704})_2^1|=|(t_{1705})_2^1|=10$ (there are many more such examples) which would indicate that perhaps one can find arbitrarily long strings of consecutive triangular numbers that are also very triangular. More generally, one may look for very triangular numbers in arithmetic progressions. In the `prime-realm' we are thus led to the celebrated Green-Tao theorem concerning such progressions.

\begin{thm}[Green-Tao \cite{gt}] Let $\pi(N)$ denote the number of primes less than or equal to $N$. If $A$ is a subset of the prime numbers such that 
\[
\limsup_{N \to \infty} \frac{|A \cap[1,N]|}{\pi(N)}>0,
\]
then for all positive $k$, the set $A$ contains infinitely many arithmetic progressions of length $k$. In particular, the entire set of prime numbers contains arbitrarily long arithmetic progressions.
\end{thm}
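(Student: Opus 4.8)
The plan is to deduce this from Szemer\'edi's theorem, which guarantees arbitrarily long arithmetic progressions inside any set of integers of positive upper density. The obstruction is that Szemer\'edi's theorem cannot be applied directly: the primes have density zero (indeed $\pi(N)/N \to 0$), and a fortiori so does any subset $A$. The strategy, following Green and Tao, is therefore to replace the uniform "background" measure on $[1,N]$ by a carefully constructed \emph{pseudorandom} measure $\nu$ relative to which $A$ has positive density, and then to establish a version of Szemer\'edi's theorem that holds relative to any such pseudorandom majorant. This last ingredient is the \emph{transference principle}, and it is what lets a density argument survive the passage to a sparse set.

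The first concrete step I would take is the \emph{$W$-trick}, which removes the obvious local biases of the primes. Since the primes avoid every residue class $0 \pmod p$ for small $p$, one sets $W = \prod_{p \le w} p$ for a slowly growing $w = w(N)$ and fixes a residue $b \pmod W$ with $\gcd(b,W)=1$. Passing to the rescaled set $\{\, n : Wn+b \text{ is prime} \,\}$ produces, by Dirichlet, a set whose density is bounded below by a genuine positive constant; combined with the hypothesis $\limsup_{N\to\infty} |A \cap [1,N]|/\pi(N) > 0$, the corresponding rescaled copy of $A$ then has density bounded below along a sequence of $N$. I would carry out the rest of the argument on these $W$-tricked sets.

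Next I would build the majorant $\nu$ on $\mathbb{Z}_N$, using a truncated divisor sum of Goldston--Y\i ld\i r\i m type, so that $\nu$ dominates a constant multiple of the indicator of the $W$-tricked primes while having mean $1+o(1)$. The crux of the whole argument --- and the step I expect to be the main obstacle --- is to verify that $\nu$ is \emph{pseudorandom}, i.e.\ that it satisfies the \emph{linear forms condition} and the \emph{correlation condition}. These amount to sharp asymptotics for counting primes simultaneously along systems of affine-linear forms, and their verification rests on delicate control of truncated singular series; getting the error terms to beat the sparsity is exactly where the real work lies.

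Granting the pseudorandomness of $\nu$, I would invoke the relative Szemer\'edi theorem by a decomposition (dense model) argument: writing $f$ for the relative density function of $A$, with $0 \le f \le \nu$, one splits $f = f_{\mathrm{U}^{\perp}} + (f - f_{\mathrm{U}^{\perp}})$ into an anti-uniform structured part bounded by an honest constant and a Gowers-uniform remainder. The generalized von Neumann theorem shows the remainder contributes negligibly to the weighted count of $k$-term progressions, so that ordinary Szemer\'edi's theorem applied to $f_{\mathrm{U}^{\perp}}$ forces many progressions. Undoing the $W$-trick and letting $N \to \infty$ then yields infinitely many length-$k$ progressions inside $A$; the final assertion is the special case $A = \{\,\text{primes}\,\}$, for which the relative density is identically $1$.
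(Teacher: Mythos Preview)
The paper does not prove this theorem at all: it is quoted verbatim (with the citation \cite{gt}) purely as background and motivation for Conjecture~\ref{conj:vtap}, and no argument is offered or expected. So there is nothing in the paper to compare your proposal against.

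That said, your outline is an accurate high-level summary of the actual Green--Tao strategy: the $W$-trick to remove small-prime biases, the Goldston--Y{\i}ld{\i}r{\i}m-type majorant $\nu$, verification of the linear-forms and correlation conditions, and then the transference/dense-model argument reducing to ordinary Szemer\'edi. As you correctly flag, the pseudorandomness verification is where essentially all the work lies; what you have written is a roadmap rather than a proof, and filling in those estimates is the content of the original paper. For the purposes of \emph{this} paper, however, the theorem is simply cited, so your sketch already goes well beyond what the authors provide.
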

 We formulate the analogous conjecture regarding very triangular numbers as follows.
\begin{conj} \label{conj:vtap} For any positive integer $k$, there exists an arithmetic progression $AP(k)$ of length $k$ so that $t_j \in VT$ for all $j \in AP(k)$. 
\end{conj}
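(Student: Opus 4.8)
The plan is to recast Conjecture \ref{conj:vtap} as a statement about the index set $S=\{\,n\in\mathbb{N}: t_n\in VT\,\}$, namely that $S$ contains arbitrarily long arithmetic progressions. The tempting first move is to show that $S$ has positive upper density and then quote Szemer\'edi's theorem. This hope collapses under a digit-count heuristic: for $n\asymp N$ the number $t_n=n(n+1)/2$ has roughly $2\log_2 N$ binary digits, so if those digits behaved like fair coin flips the count $|(t_n)_2^1|$ would cluster around $\log_2 N$ with fluctuations of order $\sqrt{\log N}$; since triangular numbers near a value $m$ are spaced $\asymp\sqrt{m}$ apart, the chance that $|(t_n)_2^1|$ actually lands on a triangular number is only $\asymp 1/\sqrt{\log N}$. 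Thus $S$ is (conjecturally) of density zero, although far denser than the primes. In short, this is not a Szemer\'edi problem but a Green--Tao problem, and it is fitting that the very triangular analogue of the Green--Tao theorem should demand the analogue of its proof.

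Accordingly, the approach I would take mirrors the transference strategy of \cite{gt}. The first step is a theorem of independent interest: a quantitative lower bound $|S\cap[1,N]|\gg N/\sqrt{\log N}$, certifying that $S$ is not too sparse. Heuristically this is the count above; making it rigorous means invoking the now-substantial theory of digit sums of polynomial sequences (Mauduit--Rivat, Drmota--Mauduit--Rivat), since $t_n$ is quadratic in $n$ and one needs a local central limit law for $|(t_n)_2^1|$. The second step is to manufacture a pseudorandom majorant $\nu\ge\mathbf{1}_S$ (suitably normalized) satisfying the linear-forms and correlation conditions, after first performing a $W$-trick-style passage to a subprogression on which the digit-sum condition is unbiased; one then feeds $\nu$ into a relative Szemer\'edi theorem (the Green--Tao apparatus, or the streamlined version of Conlon--Fox--Zhao) to extract progressions of every length.

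The main obstacle is the second step: controlling the \emph{joint} distribution of the digit sums $|(t_n)_2^1|,|(t_{n+d})_2^1|,\ldots,|(t_{n+(k-1)d})_2^1|$ along a progression. These quantities refuse to decouple, because the quadratics $t_{n+jd}$ share a common high-order part and the binary carries generated in passing from $t_n$ to $t_{n+d}=t_n+nd+t_d$ are genuinely global. Verifying pseudorandomness of the majorant is exactly the place where the digit structure collides with the arithmetic structure, and I expect this to require new input on the distribution of digit sums of quadratics in arithmetic progressions, beyond what the single-sequence results currently supply.

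It is worth recording an elementary alternative that stays within the constructive spirit of the earlier sections, if only to see why it stalls. One would try to exhibit, for each $k$, an explicit progression assembled from the transparent families already in hand---the block-of-ones numbers of Corollary \ref{cor:infinitemanyVT} and the indices $2^k-2^{\ell}$ of Theorem \ref{thm:twinVText}, which for fixed $k$ already furnish many $n$ with $|(t_n)_2^1|=k$. The trouble is that these families are geometric, not arithmetic: the indices $2^k-2^{\ell}$ have successive differences $1,2,4,\ldots$, and forcing any long list of $VT$-indices to be equally spaced reintroduces precisely the uncontrolled carries in $t_{n+d}=t_n+nd+t_d$ that the clean constructions were built to dodge. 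Managing those carries simultaneously across all $k$ terms is the crux, and it is here that the purely combinatorial method gives out, returning us to the analytic route above.
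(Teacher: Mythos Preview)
Your proposal is not a proof, and you are candid about this: it is a research programme sketching how a Green--Tao-style transference argument might eventually establish the conjecture, together with reasons why both the direct Szemer\'edi route and a purely constructive route stall. That candour is appropriate, because the paper does not prove Conjecture~\ref{conj:vtap} either. The statement is explicitly a conjecture; immediately after stating it, the authors outline the same Szemer\'edi approach you consider, report that their numerical experiments on $\limsup_{N\to\infty}\pi_{VT}(t_N)/\pi_T(t_N)$ are inconclusive, and then retreat to proving only the case $k\le 6$ (with common difference $1$) via Proposition~\ref{prop:repeatingpattern} and Corollary~\ref{cor:6cons}. In the closing section the full conjecture is listed among the open problems. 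So there is no proof in the paper against which your attempt can be judged.

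One substantive divergence is worth flagging. Your coin-flip heuristic predicts that the index set $S=\{n:t_n\in VT\}$ has density zero, which would indeed block a direct appeal to Szemer\'edi and motivate your transference strategy. The paper, by contrast, records the authors' ``sense (based on numerical results) \ldots\ that very triangular numbers \ldots\ should have positive upper density in the set of triangular numbers,'' which would make Szemer\'edi applicable outright. Neither side proves its density claim; your heuristic is plausible but not rigorous, and the Mauduit--Rivat/Drmota--Mauduit--Rivat machinery you invoke for the quadratic $t_n$ would already be needed just to settle this preliminary question. Until that is resolved, your proposal remains an outline rather than a proof, and the conjecture remains open in both your write-up and the paper.
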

A plausible approach to proving this conjecture could start with the following theorem of Szemr\'edi.
\begin{thm}[Szemer\'edi, \cite{sz75}] \label{thm:szemeredi} If $A \subset \mathbb{N}$ is such that
\[
\limsup_{N \to \infty} \frac{|A \cap \{1,2,\ldots,N\}|}{N}>0,
\]
then for all $k \in \mathbb{N}$, $A$ contains infinitely many arithmetic progressions of length $k$.
\end{thm}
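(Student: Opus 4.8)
The plan is to reduce to a finitary density statement and then drive it home by a density-increment argument, the strategy common to the Fourier-analytic proofs of Roth and Gowers. The finitary statement reads: for every $\delta>0$ and every $k$ there is an $N_0(\delta,k)$ such that any subset of $\{1,\ldots,N\}$ of size at least $\delta N$ contains a nontrivial $k$-term arithmetic progression whenever $N\geq N_0$. Granting this, the theorem follows by a short contradiction. If $A$ contained only finitely many $k$-term progressions, with largest element $M$, then $A\cap(M,\infty)$ would contain none; but deleting an initial segment cannot lower the upper density, so $A\cap(M,\infty)$ still has positive upper density, whence the finitary statement, applied to a sufficiently long initial segment, produces a $k$-progression lying entirely in $A\cap(M,\infty)$ --- the desired contradiction.

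First I would settle the base case $k=3$ (Roth's theorem) by Fourier analysis on $\mathbb{Z}/N\mathbb{Z}$. Expanding the count of $3$-term progressions in $A$ in terms of the Fourier coefficients $\widehat{1_A}$, the frequency-zero term supplies the expected main term of order $\delta^3 N^2$, while the rest is controlled by $\max_{r\neq 0}|\widehat{1_A}(r)|$. If $A$ had no genuine $3$-progression the error would have to swamp the main term, forcing an abnormally large nonzero coefficient; such a coefficient means $1_A$ correlates with a linear phase, and a pigeonhole then exhibits a long sub-progression of $\{1,\ldots,N\}$ on which the density of $A$ exceeds $\delta$ by at least $c\delta^2$. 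Iterating this increment $O(1/\delta)$ times would push the density past $1$, a contradiction. This both proves $k=3$ and fixes the template for larger $k$.

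For general $k$ the single Fourier coefficient is replaced by the Gowers uniformity norm $\|\cdot\|_{U^{k-1}}$: a generalized von Neumann inequality shows that a set small in this norm contains close to the expected $\delta^k N^2$ progressions, so a $k$-progression-free set must have large $U^{k-1}$ norm. The decisive --- and hardest --- input is then the inverse theorem for the Gowers norms, which converts large $U^{k-1}$ norm into correlation with a polynomial phase of degree $k-2$ (more precisely, with a nilsequence); from such correlation one extracts a density increment on a sub-progression exactly as before, and the bounded iteration closes the induction on $k$. I expect this inverse theorem to be the main obstacle: for $k=3$ it collapses to the elementary Fourier fact above, but for $k\geq 4$ linear phases no longer detect non-uniformity and one is forced into higher-order Fourier analysis and the arithmetic of nilpotent groups. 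An alternative that avoids this obstacle, at the cost of giving no effective bound on $N_0(\delta,k)$, is Furstenberg's correspondence principle, which recasts the claim as a multiple-recurrence theorem for a measure-preserving system $(X,\mathcal{B},\mu,T)$ and proves it via the Furstenberg--Zimmer structure theory; there the difficulty instead migrates to controlling a tower of compact extensions over the Kronecker factor.
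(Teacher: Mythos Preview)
The paper does not prove this theorem at all: it is quoted as Szemer\'edi's 1975 result and used as a black box in the discussion following Conjecture~\ref{conj:vtap}. So there is no ``paper's own proof'' to compare against, and in the context of this paper no proof is expected of you.

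That said, what you have written is not a proof but a roadmap. You correctly reduce the infinitary statement to the finitary one, and your description of the density-increment architecture (Roth via Fourier for $k=3$, Gowers $U^{k-1}$ norms plus a generalized von Neumann inequality plus an inverse theorem for general $k$) is accurate as an outline of Gowers' 2001 approach. But the entire difficulty of Szemer\'edi's theorem is hidden in the sentence ``the decisive --- and hardest --- input is then the inverse theorem for the Gowers norms''; you neither state this theorem precisely nor indicate any mechanism for proving it, and the same applies to the Furstenberg alternative, where ``the Furstenberg--Zimmer structure theory'' is again invoked as a black box. As written, both branches assume results that are each at least as deep as Szemer\'edi's theorem itself, so the argument is circular in strength if not in logic. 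Finally, note that neither branch is the proof in the cited reference \cite{sz75}: Szemer\'edi's original 1975 argument is purely combinatorial, predating both Furstenberg's ergodic proof and Gowers' harmonic-analytic one.
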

If $\pi_{VT}(x)$ and $\pi_T(x)$ denote the counting functions of very triangular and triangular numbers less than or equal to $x$, then
\[
\frac{\pi_{VT}(t_N)}{\pi_T(t_N)}=\frac{|VT \cap \{t_1,t_2,\ldots, t_N \}|}{N}=\frac{| \sigma(\mathbb{N}) \cap \{1,2,\ldots, N\}|}{N},
\]
where $\{ t_{\sigma(n)} \}_{n=1}^{\infty}$ is the sequence of very triangular numbers. Consequently, by applying Theorem \ref{thm:szemeredi} we see that if $\limsup_{N \to \infty} \frac{\pi_{VT}(t_N)}{\pi_T(t_N)} >0$, then $\sigma(\mathbb{N})$ contains infinitely many arithmetic progressions of length $k$ for every $k \in \mathbb{N}$. For any such progression $a\cdot j+b$, $0 \leq j \leq k-1$, the set $\{ t_{a\cdot j+b} \}_{j=0}^{k-1}$ is a subset of $VT$. Our CAS computations -- limited by available memory -- are inconclusive in terms of the potential value of the critical quantity $\limsup_{N \to \infty} \frac{\pi_{VT}(t_N)}{\pi_T(t_N)}$. Undeterred, and following historical precedence\footnote{Before proving the full result, Szemer\'edi first established the existence of infinitely many arithmetic progressions with lengths $k=4$ (see \cite{sz69})}, we next demonstrate the existence of infinitely many arithmetic progressions $AP(k)$ or length six or less, such that $\{t_j\}_{j \in AP(k)} \subset VT$\footnote{When it comes to prime numbers, the terminology \textit{primes in arithmetic progression} refers to consecutive terms in an arithmetic progression which are all prime. \textit{Consecutive primes in arithmetic progression} on the other hand refers to consecutive terms in an arithmetic progression which are prime, with only composite numbers between them.}. In fact, we establish the existence of strings of consecutive very triangular numbers which are also consecutive triangular numbers. The only result we need in order to be able to demonstrate our claim is the following proposition.
\begin{prop}\label{prop:repeatingpattern} Suppose that $n >5$. Then for all $m \geq n$ and $0 \leq k <2^{\lfloor (n-1)/2 \rfloor}$, the following equality holds:
\[
\left |(t_{2^m+3+k})_2^1\right|=\left |(t_{2^n+3+k})_2^1\right|.
\] 
\end{prop}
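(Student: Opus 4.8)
The plan is to reduce everything to a single binary decomposition of $t_{2^m+3+k}$ into three blocks of bits occupying pairwise disjoint positions, so that the total number of 1s is the sum of the three contributions, each of which is manifestly independent of $m$. Writing $a := 3+k$, a routine expansion of $(2^m+a)(2^m+a+1)=2^{2m}+2^m(2a+1)+a(a+1)$ yields the identity
\[
t_{2^m+a}=2^{2m-1}+2^{m-1}(2a+1)+t_a,
\]
valid for every $m\ge 1$, since $2a+1$ is odd and $a(a+1)$ is even. The first summand is a single 1 at position $2m-1$; the second is the odd number $2a+1$ shifted up by $m-1$ places, contributing exactly $|(2a+1)_2^1|$ ones independently of $m$; the third is $t_a$, sitting in the low-order bits and contributing $|(t_a)_2^1|$ ones, again independent of $m$. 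If these three blocks never overlap, then
\[
\left|(t_{2^m+3+k})_2^1\right|=1+|(2a+1)_2^1|+|(t_a)_2^1|,
\]
a quantity depending only on $a=3+k$; equating the cases $m$ and $n$ then gives the proposition.

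The remaining work is to verify bit-disjointness for all $m\ge n$ under the hypotheses $n>5$ and $0\le k<2^{\lfloor(n-1)/2\rfloor}$. First I would record the bit-extent of each block. Since $k\le 2^{\lfloor(n-1)/2\rfloor}-1$ we have $a\le 2+2^{\lfloor(n-1)/2\rfloor}$, so $2a+1<2^{\lfloor(n-1)/2\rfloor+2}$ (using $n\ge 6$); hence the middle block occupies positions in the window from $m-1$ up to $m+\lfloor(n-1)/2\rfloor$. Separating its top from the leading bit $2m-1$ then requires only $\lfloor(n-1)/2\rfloor+1<m$, which follows from $\lfloor(n-1)/2\rfloor+1\le (n+1)/2<n\le m$. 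The genuinely delicate comparison is between the low block $t_a$ and the bottom of the middle block at position $m-1$: I must show the top bit of $t_a$ lies strictly below $m-1$.

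This last comparison is where the precise range of $k$ does its work, and it is the main obstacle. Taking the extreme value $a=2+2^{\lfloor(n-1)/2\rfloor}$ (legitimate since $t_a$ increases with $a$) gives
\[
t_a\le 2^{2\lfloor(n-1)/2\rfloor-1}+5\cdot 2^{\lfloor(n-1)/2\rfloor-1}+3,
\]
and the point is that the exponent $2\lfloor(n-1)/2\rfloor-1$ is tuned so that $t_a<2^{n-1}$. I would verify this by splitting on the parity of $n$ (the floor equals $(n-1)/2$ or $(n-2)/2$) and checking the two resulting elementary inequalities, both of which hold for all $n\ge 6$. It follows that the top bit of $t_a$ is at most $n-2<m-1$ for every $m\ge n$, so the low block never reaches the bottom of the middle block. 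With all three blocks disjoint for every $m\ge n$ — including the tight boundary case $m=n$ — the displayed count formula holds uniformly in $m$, and the proposition follows.
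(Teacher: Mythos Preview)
Your argument is correct and follows essentially the same route as the paper's own proof: expand $t_{2^m+a}=2^{2m-1}+2^{m-1}(2a+1)+t_a$ for $a=3+k$, use the bound on $k$ to force the three blocks into disjoint bit-ranges, and read off that the digit-sum is independent of $m$. Your packaging in terms of $a$ (rather than the paper's further expansion $t_a=6+t_k+3k$) is a little cleaner, but the underlying idea is identical.
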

\begin{proof} Note first that for any $n$ and $0\leq k$,
\[
t_{2^n+3+k}=2^{2n-1}+2^{n+1}+(k+1)2^n+2^{n-1}+4+2+t_k+3k.
\]
Given the restriction $k < \lfloor (n-1)/2 \rfloor$, we see that 
\begin{align*}
t_k&=\frac{k(k+1)}{2}<\frac{2^{\lfloor (n-1)/2 \rfloor}(2^{\lfloor (n-1)/2 \rfloor}+1)}{2} \leq 2^{n-2}+2^{\lfloor (n-1)/2 \rfloor-1}< 2^{n-2}+2^{n-4},\\
&\textrm{and}\\
3k&< 3 \cdot 2^{\lfloor (n-1)/2 \rfloor}=2^{\lfloor (n-1)/2 \rfloor+1}+2^{\lfloor (n-1)/2 \rfloor}<2^{n-2}+2^{n-3}.
\end{align*}
It follows that $t_k+3k <2^{n-1}$, and hence
\begin{align*}
\left |(t_{2^m+3+k})_2^1\right|&= \left|(2^{2n-1}+2^{n+1}+(k+1)2^n+2^{n-1})_2^1 \right|+\left|(t_k+3k+4+2)_2^1 \right| \\
&=\left|(2^{2m-1}+2^{m+1}+(k+1)2^m+2^{m-1})_2^1 \right|+\left|(t_k+3k+4+2)_2^1 \right|\\
&=\left |(t_{2^n+3+k})_2^1\right|.
\end{align*}
The proof is complete.
\end{proof}
\begin{cor} \label{cor:6cons} There are infinitely many $j \in \mathbb{N}$ such that $\{t_{j+\ell}\}_{\ell=0}^5 \subset VT$.
\end{cor}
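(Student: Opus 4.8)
The plan is to use Proposition~\ref{prop:repeatingpattern} as a \emph{lifting} device. That proposition guarantees that the quantity $|(t_{2^m+3+k})_2^1|$, regarded as a function of $k$ on the range $0 \le k < 2^{\lfloor(n-1)/2\rfloor}$, stabilizes for $m \ge n$ and equals its value at $m=n$. Consequently a \emph{single} string of six consecutive very triangular numbers of the form $t_{2^n+3+k_0}, t_{2^n+3+k_0+1}, \ldots, t_{2^n+3+k_0+5}$ propagates automatically to infinitely many such strings, one for each exponent $m \ge n$. So the entire problem reduces to producing one suitable seed and then invoking the proposition.

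Concretely, I would first locate a \emph{seed}: an exponent $n>5$ together with an integer $k_0 \ge 0$ satisfying $k_0 + 5 < 2^{\lfloor(n-1)/2\rfloor}$ and
\[
\left|(t_{2^n+3+k_0+\ell})_2^1\right| \in T \qquad \text{for every } \ell \in \{0,1,2,3,4,5\}.
\]
This is a finite computation. I would run a CAS search over admissible pairs $(n,k_0)$, beginning with the smallest $n$ for which the window $[0, 2^{\lfloor(n-1)/2\rfloor})$ is wide enough to contain six consecutive indices (so that $2^{\lfloor(n-1)/2\rfloor} \ge 6$, i.e. $n \ge 7$), and increasing $n$ — thereby widening the admissible range of $k_0$ — until a run of length six appears. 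The earlier empirical runs of length three and four suggest such a seed is within easy computational reach; however, the search must be carried out specifically in the \emph{low-}$k$ \emph{region just above a power of two}, since, for instance, the length-four run near $t_{1702}$ has $1702 = 2^{10}+678$ and hence does not itself lie in that region.

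With a seed in hand the rest is immediate. Fix $m \ge n$ and $\ell \in \{0,\ldots,5\}$. Since $k := k_0 + \ell$ satisfies $0 \le k < 2^{\lfloor(n-1)/2\rfloor}$, Proposition~\ref{prop:repeatingpattern} yields
\[
\left|(t_{2^m+3+k_0+\ell})_2^1\right| = \left|(t_{2^n+3+k_0+\ell})_2^1\right| \in T,
\]
so $t_{2^m+3+k_0+\ell} \in VT$. Setting $j = j_m := 2^m + 3 + k_0$ therefore gives $\{t_{j+\ell}\}_{\ell=0}^5 \subset VT$, and as $m$ ranges over $\{n, n+1, n+2, \ldots\}$ the indices $j_m$ are strictly increasing, hence pairwise distinct, producing infinitely many admissible $j$.

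The main obstacle is the seed search of the first step: nothing proved so far guarantees the \emph{existence} of six consecutive very triangular numbers in the prescribed low-$k$ region, so the argument ultimately rests on an explicit finite verification. The delicate point is that the admissible window has width only $2^{\lfloor(n-1)/2\rfloor}$, so for small $n$ there may be no room for a run of six, and one must be prepared to take $n$ large enough while confirming that the constraint $k_0 + 5 < 2^{\lfloor(n-1)/2\rfloor}$ is actually met by the seed that the search returns.
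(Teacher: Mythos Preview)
Your approach is exactly the paper's: reduce to finding a single seed $(n,k_0)$ with $n>5$ and $k_0+5<2^{\lfloor(n-1)/2\rfloor}$, then lift via Proposition~\ref{prop:repeatingpattern} to all $m\ge n$. The only gap is that you describe the seed search but do not actually carry it out; the paper completes the argument by exhibiting the explicit seed $n=30$, $k_0=1870$ (so $k_0+5=1875<2^{14}$), and verifying that $|(t_{2^{30}+1873+\ell})_2^1|=21$ for $\ell=0,\ldots,5$.
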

\begin{proof} In light of Proposition \ref{prop:repeatingpattern}, it suffices to find an $n >5$ and integers $0 \leq k<k+1<k+2<k+3<k+4<k+5 <2^{\lfloor (n-1)/2 \rfloor}$ so that $\{ t_{2^n+3+k+\ell} \}_{\ell=0}^5 \subset VT$. Using a CAS, we find that $n=30$, and $k=1870<1871<1872<1873<1874<1875<2^{\lfloor (30-1)/2 \rfloor}=2^{14}$ is such a pair. In fact
\begin{eqnarray*}
&& \left|(t_{2^{30}+1873})_2^1 \right| =\left|(t_{2^{30}+1874})_2^1 \right|=\left|(t_{2^{30}+1875})_2^1 \right|= \\
&& \left|(t_{2^{30}+1876})_2^1 \right|= \left|(t_{2^{30}+1877})_2^1 \right|=\left|(t_{2^{30}+1878})_2^1 \right|=21.
\end{eqnarray*}
\end{proof}
We remark that the first instance of six consecutive triangular numbers that are all very triangular occurs much sooner: $\{ t_{30301},t_{30302},t_{30303},t_{30304},t_{30305},t_{30306}\} \subset VT$. Corollary \ref{cor:6cons} generalizes the twin very triangular number theorem, and could be viewed as the sextuplet very triangular number theorem, although we suspect that such a moniker is not likely to catch on. 

\section{Gaps and consecutive strings} Results concerning strings of consecutive natural numbers that either must contain primes, or do not contain primes have a simple elegance about them. Bertrand's postulate (see \cite{bertrand}) states that for all integers $n >3$ there exists a prime $p$ satisfying $n<p<2n-2$. On the other hand, considering the integers $(n+1)!, (n+1)!+2, (n+1)!+3, \ldots, (n+1)!+(n+1)$ we see that given any $n \in \mathbb{N}$, there are infinitely many strings of $n$ or more consecutive composite numbers. Much to our delight, we found results analogous to these concerning very triangular numbers. 
\begin{thm}[Bertrand's postulate for very triangular numbers] For $n=4,5,6$ and $n>9$, there exists a very triangular number $m \in VT$ with $t_n<m<t_{2n}$.
\end{thm}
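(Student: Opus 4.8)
The plan is to reduce the statement to a covering problem about indices and then feed it the single family of very triangular numbers supplied by Theorem~\ref{thm:inftyVT}. Since the triangular numbers are strictly increasing, a triangular number $m$ satisfies $t_n < m < t_{2n}$ if and only if $m = t_j$ for some integer $j$ with $n < j < 2n$. Hence it suffices to produce, for each admissible $n$, an index $j \in (n, 2n)$ with $t_j \in VT$.

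The engine I would use is Theorem~\ref{thm:inftyVT} applied with $\ell = 2$: since $2(\ell + 1) = 6 = t_3$ is an even triangular number, the theorem gives $t_{2^m + 3} \in VT$ for every $m > 2\ell - 1 = 3$, that is, for every $m \geq 4$. Thus it is enough to find, for each $n$ in the stated range, an exponent $m \geq 4$ with $n < 2^m + 3 < 2n$, equivalently with $2^m \in (n - 3,\, 2n - 3)$.

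For $n \geq 10$ I would argue that this open interval always contains a suitable power of two. Its endpoints satisfy $\frac{2n - 3}{n - 3} = 2 + \frac{3}{n - 3} > 2$, and any open interval of positive reals whose right endpoint exceeds twice its left endpoint must contain a power of two: if $2^k \leq n - 3 < 2^{k+1}$, then $2^{k+1} > n - 3$ and $2^{k+1} = 2 \cdot 2^k \leq 2(n-3) < 2n - 3$, so $2^{k+1} \in (n-3,\, 2n-3)$. It then remains to check that the exponent can be taken $\geq 4$. For $10 \leq n \leq 18$ one has $n - 3 < 16 < 2n - 3$, so $2^4 = 16$ itself lies in the interval and $m = 4$ works, yielding the index $19$; for $n \geq 19$ one has $n - 3 \geq 16$, so the power of two produced above exceeds $16$ and hence corresponds to $m \geq 5$. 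In either case $t_{2^m + 3} \in VT$ supplies the required number strictly between $t_n$ and $t_{2n}$.

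Finally, the three exceptional cases $n = 4, 5, 6$ fall outside the reach of this family, whose smallest index is $2^4 + 3 = 19$, and must be handled by exhibiting explicit witnesses: $t_6 = 21 = (10101)_2$ has three $1$s and so lies in $VT$, and it satisfies $t_4 = 10 < 21 < 36 = t_8$ and $t_5 = 15 < 21 < 55 = t_{10}$, covering $n = 4$ and $n = 5$; while $t_7 = 28 = (11100)_2$ also has three $1$s, lies in $VT$, and satisfies $t_6 = 21 < 28 < 78 = t_{12}$, covering $n = 6$. I expect the main obstacle to be purely the bookkeeping at the low end: the family $t_{2^m+3}$ only begins to meet the intervals $(t_n, t_{2n})$ once $n \geq 10$, so the boundary constraint $m \geq 4$ and the isolated cases $n = 4, 5, 6$ require the separate verifications above rather than the uniform argument, and one should note in passing that the excluded values $n = 7, 8, 9$ genuinely admit no very triangular number in the relevant range, which is why the theorem omits them.
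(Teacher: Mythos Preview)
Your proof is correct and follows essentially the same approach as the paper: both reduce to finding an index of the form $2^m+3$ (with $m\ge 4$, supplied by Theorem~\ref{thm:inftyVT} with $\ell=2$) inside the open interval $(n,2n)$, and both dispose of $n=4,5,6$ by exhibiting $21$ and $28$ explicitly. The only cosmetic difference is in the bookkeeping for $n\ge 10$: the paper splits into the three cases $n=2^{k-1}+1$, $n=2^{k-1}+2$, $2^{k-1}+2<n\le 2^k$, whereas you observe more cleanly that $(2n-3)/(n-3)>2$ forces a power of two into $(n-3,2n-3)$, then check separately that the exponent is at least $4$.
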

\begin{proof} Suppose first that $n >9$. Then there exists $k \in \mathbb{N}$ such that either 
\begin{itemize}
\item[(i)] $n=2^{k-1}+1$, 
\item[(ii)] $n=2^{k-1}+2$, or 
\item[(iii)]$2^{k-1}+2<n\leq 2^k$. 
\end{itemize}
It's easy to check that in cases (i) and (ii), $n< 2^{k-1}+3< 2n$, and consequently $t_n <t_{2^{k-1}+3} <t_{2n}$. Since $n>9$, we see that $k>4$, and by Theorem \ref{thm:inftyVT} $t_{2^{k-1}+3} \in VT$. In case (iii), we have $n\leq 2^k <2^k+3<2^k+4<2n$, and once more we conclude that $t_n<t_{2^k+3}<t_{2n}$, with $t_{2^k+3} \in VT$. Table \ref{Bertrand} completes the proof by checking the cases for $n=4,5,6$.

\begin{table}[htp]
\caption{Bertrand's postulate for very triangular numbers (small $n$)}
\begin{center}
\begin{tabular}{c|ccc}
$n$ & $t_n$ & $m \in VT$ & $t_{2n}$\\
\hline
\hline
4 & 10 & 21,28 & 36 \\
5 & 15 & 21,28 & 55 \\
6 & 21 & 28 & 78 \\
7 & 28 & N/A& 105 \\
8 & 26 & N/A & 136 \\
9 & 45 & N/A & 171\\
10 & 55 & 190 & 210\\
$\vdots$ & $\vdots$ & $\vdots$ & $\vdots$ \\
19 & 210 & 231, 276, 378, 435, 630 & 741\\
$\vdots$ & $\vdots$ & $\vdots$ & $\vdots$ \\
\hline
\end{tabular}
\end{center}
\label{Bertrand}
\end{table}%

\end{proof} 
Next, we turn our attention to the gaps between consecutive very triangular numbers.

\begin{prop} \label{prop:gaps} Suppose that $k \in T$ and that $4 \mid k$. If $2^k-2^{\frac{k}{2}} < n \leq 2^k-2^{ \frac{k}{2}}+\frac{k}{4}$, then $t_n \notin VT$.
\end{prop}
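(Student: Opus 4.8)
The plan is to compute the number of 1s in $t_n$ \emph{exactly} for $n$ in the given interval and show that this count always falls strictly between two consecutive triangular numbers. Write $k = t_r$ and put $n = 2^k - 2^{k/2} + m$ with $1 \le m \le k/4$ (note that $k/2$, $k/4$ and $3k/2$ are all integers, as $4 \mid k$). Expanding $n(n+1)/2$ exactly as in the computation behind Theorem~\ref{thm:twinVText} gives
\[
t_n = 2^{2k-1} - 2^{3k/2} + (m+1)2^{k} - (2m+1)2^{k/2-1} + t_m ;
\]
the case $m = 0$ recovers $t_{2^k - 2^{k/2}}$, which Theorem~\ref{thm:twinVText} tells us has exactly $k$ ones. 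Since $2^{2k-1} - 2^{3k/2} = \sum_{j=3k/2}^{2k-2} 2^j$ contributes $k/2 - 1$ ones, and since $m+1 \le k/4 + 1 < 2^{k/2}$ forces the remaining terms to sum to a value below $2^{3k/2}$, the leading block is disjoint in binary from the rest.

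It then remains to count the ones in $M := (m+1)2^{k} - (2m+1)2^{k/2-1} + t_m$, and the subtraction here is the crux. Setting $k = 2h$ and splitting off $(m+1)2^{2h} = m\,2^{2h} + 2^{2h}$, I would use
\[
2^{2h} - (2m+1)2^{h-1} = 2^{h-1}\bigl((2^{h+1}-1) - 2m\bigr),
\]
and recognize $(2^{h+1}-1) - 2m$ as the bitwise complement of $2m$ within $h+1$ bits, which therefore carries $(h+1) - |(2m)_2^1| = (h+1) - |(m)_2^1|$ ones. The size bounds $2m \le k/2$ and $t_m \le t_{k/4}$ (the latter of bit-length $O(\log k)$) then show that, throughout the admissible range of $m$, the three pieces $m\,2^{2h}$, $2^{h-1}\bigl((2^{h+1}-1)-2m\bigr)$ and $t_m$ occupy pairwise disjoint strings of bit positions, with no carries between them. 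Summing the three contributions gives
\[
|(M)_2^1| = |(m)_2^1| + \bigl((h+1) - |(m)_2^1|\bigr) + |(t_m)_2^1| = \tfrac{k}{2} + 1 + |(t_m)_2^1| .
\]

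Adding the leading block yields the clean identity $|(t_n)_2^1| = \bigl(\tfrac{k}{2}-1\bigr) + \bigl(\tfrac{k}{2}+1\bigr) + |(t_m)_2^1| = k + |(t_m)_2^1|$. As $m \ge 1$ forces $|(t_m)_2^1| \ge 1$, this count strictly exceeds $k = t_r$; and since the next triangular number is $t_{r+1} = k + (r+1)$, it suffices to establish $|(t_m)_2^1| \le r$ for all $1 \le m \le k/4$. Granting this, $t_r < |(t_n)_2^1| < t_{r+1}$, so $|(t_n)_2^1| \notin T$ and hence $t_n \notin VT$. The bound $|(t_m)_2^1| \le r$ is a routine growth estimate: $|(t_m)_2^1|$ is at most the bit-length of $t_{k/4}$, which is $2\log_2 k + O(1)$, while $r \sim \sqrt{2k}$ grows strictly faster; the finitely many small cases (the smallest admissible being $k = 28$, $r = 7$, where $\max_{1 \le m \le 7}|(t_m)_2^1| = 4 < 7$) are dispatched by direct inspection.

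The main obstacle is the middle step: carrying out the borrow created by $-(2m+1)2^{k/2-1}$ correctly and verifying that the three resulting blocks really do occupy disjoint bit positions, so that their 1-counts simply add. Once that bookkeeping is in place, the identity $|(t_n)_2^1| = k + |(t_m)_2^1|$ makes the conclusion almost immediate, the only remaining ingredient being the elementary comparison of $\log_2 k$ with $\sqrt{k}$.
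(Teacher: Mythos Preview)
Your approach is correct and follows the same overall architecture as the paper: write $n = 2^k - 2^{k/2} + m$, establish the exact identity $|(t_n)_2^1| = k + |(t_m)_2^1|$, and then sandwich this value strictly between $t_r = k$ and $t_{r+1} = k + r + 1$ via a routine comparison of $\log_2 k$ against $\sqrt{k}$. The endgame and the paper's endgame are essentially identical.

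Where you genuinely diverge is in the middle computation. The paper splits off the case $m = 2^p$ and handles it separately, and for general $m$ it carries out a multi-page telescoping calculation to count the ones in $\bigl(\sum_{j=k/2+1}^{k}2^j\bigr)\bigl(m+1\bigr)$, eventually arriving at $k/2$. Your decomposition
\[
M = m\cdot 2^{2h} \;+\; 2^{h-1}\bigl((2^{h+1}-1) - 2m\bigr) \;+\; t_m
\]
is much tidier: the observation that $(2^{h+1}-1) - 2m$ is the $(h{+}1)$-bit complement of $2m$ gives the one-count $(h+1) - |(m)_2^1|$ immediately, and the resulting cancellation of $|(m)_2^1|$ explains in one line why the answer is independent of the bit pattern of $m$. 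It also absorbs the $m = 2^p$ case with no extra work. The disjointness of the three blocks that you flag as ``the main obstacle'' is indeed routine: the middle block is odd times $2^{h-1}$ and bounded above by $2^{2h} - 2^{h-1}$, while $t_m \le t_{k/4} < 2^{h-1}$ for every admissible $k$ (the smallest being $k = 28$, $h = 14$). So your sketch converts to a complete proof with no surprises, and is considerably shorter than the paper's.
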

\begin{proof} Let $n, k=4 \ell$ be as in the statement, and write $n=2^k-2^{\frac{k}{2}}+m$. Suppose first that $m=2^p$ for some $0 \leq p \leq \lfloor \log_2 \ell \rfloor$. In this case $t_n$ can be written as
\begin{eqnarray*}
t_n&=&\frac{1}{2}\left(\sum_{j=2\ell}^{4\ell-1}2^j+2^p \right)\left(\sum_{j=2\ell}^{4\ell-1}2^j+2^p+1 \right)\\
&=&\frac{1}{2}\left(\left( \sum_{j=2\ell}^{4\ell-1}2^j\right)^2+\sum_{j=2\ell+p+1}^{4\ell+p}2^{j} + \sum_{j=2\ell}^{4\ell-1}2^j+2^{2p}+ 2^p\right) \\
&=&\frac{1}{2}\left(\sum_{j=6\ell+1}^{8 \ell -1} 2^j +2^{4 \ell}+\sum_{j=2\ell+p+1}^{4\ell+p}2^{j} + \sum_{j=2\ell}^{4\ell-1}2^j+2^{2p}+ 2^p \right)\\
&=&\frac{1}{2}\left(\sum_{j=6\ell+1}^{8 \ell -1} 2^j +2^{4 \ell+p+1}+\sum_{j=2\ell+p+1}^{4\ell-1}2^{j} + \sum_{j=2\ell}^{4\ell-1}2^j+2^{2p}+ 2^p \right)\\
&=&\frac{1}{2}\left(\sum_{j=6\ell+1}^{8 \ell -1} 2^j +2^{4 \ell+p+1}+\sum_{j=2\ell+p+2}^{4\ell}2^{j} + \sum_{j=2\ell}^{2\ell+p}2^j+2^{2p}+ 2^p \right).
\end{eqnarray*}
Given that $0 \leq p \leq \lfloor \log_2 \ell \rfloor$, we see that 
\[
p\leq 2p<2\ell \leq 2\ell+p<2\ell+p+2<4\ell<4\ell+p+1<6\ell+1
\]
as soon as $\ell \geq 2$, and hence
\[
|(t_n)_2^1|=\begin{cases} 4\ell+1=k+1 &\mbox{if } p=0 \\
4\ell+2=k+2 & \mbox{if } p \geq 1 \end{cases} \notin T \quad \mbox{ for any} \quad k \in T \quad \mbox{with} \quad k\geq 3.
\]
Finally, if $\ell=1$, then $n=13$ and $|(t_{13})_2^1|=5 \notin T$. \\
 Assume now that $n, k=4 \ell$ are as in the statement, and write $n=2^k-2^{\frac{k}{2}}+m$, with 
\[
m=\sum_{j=1}^{r}  2^{\sigma(j)} \quad \textrm{for some} \quad 2 \leq r \leq \lfloor \log_2 \ell \rfloor,
\] 
and 
\[
\sigma:\{1,2,\ldots, r\} \hookrightarrow \{0,1,2,\ldots,\lfloor \log_2 \ell \rfloor\}.
\]
A simple calculation shows that
\[
t_n=\frac{1}{2}\left(2^{k/2}+\left(\sum_{j=k/2+1}^k 2^j+\sum_{j=1}^r 2^{\sigma(j)}\right)\left(\sum_{j=1}^r 2^{\sigma(j)}+1 \right) +\sum_{j=3k/2+1}^{2k-1} 2^j\right).
\]
The reader will note that
\[
2 \leq \sum_{j=1}^r 2^{\sigma(j)}+1 \leq 2^{\sigma(r)+1} \leq 2^{r+1} \leq 2^{\log_2(\ell)+1} \leq \frac{k}{2}<2^{k/2},
\]
and that
\[
2 \sigma(r)+1 \leq 2r+1 \leq 2 \log_2(\ell)+1<2\ell=\frac{k}{2}.
\]
Consequently, the largest power of 2 in the expression
\[
\left(\sum_{j=k/2+1}^k 2^j \right)\left(\sum_{j=1}^r 2^{\sigma(j)}+1 \right)
\]
is less than or equal to $3k/2$, whereas the largest power of 2 in the expression
\[
\left(\sum_{j=1}^r 2^{\sigma(j)}\right)\left(\sum_{j=1}^r 2^{\sigma(j)}+1 \right)
\]
is less than $k/2$. Therefore, 
\begin{eqnarray} \label{eq:gapcount}
|(t_n)_2^1|&=&\left|\left(\left(\sum_{j=k/2+1}^k 2^j+\sum_{j=1}^r 2^{\sigma(j)}\right)\left(\sum_{j=1}^r 2^{\sigma(j)}+1 \right) \right)_2^1 \right|+\frac{k}{2} \nonumber \\
&=&\left| \left(\left(\sum_{j=k/2+1}^k 2^j\right)\left(\sum_{j=1}^r 2^{\sigma(j)}+1 \right)\right)_2^1\right|\\
&+&\left|\left(\left(\sum_{j=1}^r 2^{\sigma(j)}\right)\left(\sum_{j=1}^r 2^{\sigma(j)}+1 \right)\right)_2^1 \right| +\frac{k}{2} \nonumber
\end{eqnarray}
We compute the first summand in \eqref{eq:gapcount}. To this end, note that
\begin{eqnarray*}
&& \left(\sum_{j=k/2+1}^k 2^j\right)\left(\sum_{j=1}^r 2^{\sigma(j)}+1 \right)=\sum_{j=k/2+1}^k 2^j+\sum_{j=1}^r \left(2^{\sigma(j)+k+1}-2^{\sigma(j)+k/2+1}\right)\\
&=&\sum_{j=k/2+1}^k 2^j+\sum_{i=1}^{\textcolor{red}{r}} \sum_{j=\sigma(i)+k/2+1}^{k+\sigma(i)} 2^j\\
&=&\left(\sum_{\textcolor{red}{j=\sigma(r)+k/2+1}}^{k+\sigma(r)} 2^j\right)+\sum_{j=k/2+1}^k 2^j+\sum_{i=1}^{r-1} \sum_{j=\sigma(i)+k/2+1}^{k+\sigma(i)} 2^j\\
&=&2^{k+1}+\sum_{j=k/2+1}^{\sigma(r)+k/2} 2^j +\left(\sum_{j=\sigma(r)+k/2+2}^{k+\sigma(r)} 2^j\right)+\sum_{i=1}^{\textcolor{red}{r-1}} \sum_{j=\sigma(i)+k/2+1}^{k+\sigma(i)} 2^j\\
&=&2^{k+1}+2^{k/2+\sigma(r)+1} +\sum_{j=k/2+1}^{\sigma(r-1)+k/2} 2^j +\left(\sum_{j=\sigma(r)+k/2+2}^{k+\sigma(r)} 2^j+\sum_{j=\sigma(r-1)+k/2+2}^{k+\sigma(r-1)} 2^j\right)\\
&+&\sum_{i=1}^{\textcolor{red}{r-2}} \sum_{j=\sigma(i)+k/2+1}^{k+\sigma(i)} 2^j\\
&=&2^{k+1}+2^{k/2+\sigma(r)+1}+2^{k/2+\sigma(r-1)+1} +\sum_{j=k/2+1}^{\sigma(r-2)+k/2} 2^j \\
&+&\left(\sum_{j=\sigma(r)+k/2+2}^{k+\sigma(r)} 2^j+\sum_{j=\sigma(r-1)+k/2+2}^{k+\sigma(r-1)} 2^j+\sum_{j=\sigma(r-2)+k/2+2}^{k+\sigma(r-2)} 2^j\right)\\
&+&\sum_{i=1}^{r-3} \sum_{j=\sigma(i)+k/2+1}^{k+\sigma(i)} 2^j\\
&=&\textcolor{blue}{2^{k+1}}+2^{k/2+\sigma(r)+1}+2^{k/2+\sigma(r-1)+1}+\cdots+2^{k/2+\sigma(2)+1} +\sum_{j=k/2+1}^{\sigma(1)+k/2} 2^j \\
&+&\left(\textcolor{blue}{\sum_{j=\sigma(r)+k/2+2}^{k+\sigma(r)} 2^j}+\sum_{j=\sigma(r-1)+k/2+2}^{k+\sigma(r-1)} 2^j+\sum_{j=\sigma(r-2)+k/2+2}^{k+\sigma(r-2)} 2^j+\cdots+\sum_{j=\sigma(1)+k/2+2}^{k+\sigma(1)} 2^j\right)\\
&=&\textcolor{blue}{2^{k+\sigma(1)+1}}+2^{k/2+\sigma(r)+1}+2^{k/2+\sigma(r-1)+1}+\cdots+2^{k/2+\sigma(2)+1} +\sum_{j=k/2+1}^{\sigma(1)+k/2} 2^j \\
&+&\left(\sum_{j=\sigma(r)+k/2+2}^{k+\sigma(r)} 2^j+\cdots+\sum_{j=\sigma(3)+k/2+2}^{k+\sigma(3)} 2^j+\textcolor{blue}{\sum_{j=\sigma(2)+k/2+2}^{k+\sigma(2)} 2^j}+\sum_{j=\sigma(1)+k/2+2}^{k} 2^j\right)\\
&=&\textcolor{blue}{2^{k+\sigma(2)+1}}+2^{k/2+\sigma(r)+1}+2^{k/2+\sigma(r-1)+1}+\cdots+2^{k/2+\sigma(2)+1} +\sum_{j=k/2+1}^{\sigma(1)+k/2} 2^j \\
&+&\left(\sum_{j=\sigma(r)+k/2+2}^{k+\sigma(r)} 2^j+\cdots+\textcolor{blue}{\sum_{j=\sigma(3)+k/2+2}^{k+\sigma(3)} 2^j}+\sum_{j=\sigma(2)+k/2+2}^{k+\sigma(1)} 2^j+\sum_{j=\sigma(1)+k/2+2}^{k} 2^j\right)
\end{eqnarray*}
\begin{eqnarray*}
&=&2^{k+\sigma(r)+1}+\textcolor{blue}{2^{k/2+\sigma(r)+1}}+2^{k/2+\sigma(r-1)+1}+\cdots+\textcolor{magenta}{2^{k/2+\sigma(2)+1}} +\sum_{j=k/2+1}^{\sigma(1)+k/2} 2^j \\
&+&\left(\sum_{j=\sigma(r)+k/2+2}^{k+\sigma(r-1)} 2^j+\textcolor{blue}{\sum_{j=\sigma(r-1)+k/2+2}^{k+\sigma(r-2)} 2^j}+\cdots+\sum_{j=\sigma(2)+k/2+2}^{k+\sigma(1)} 2^j+\textcolor{magenta}{\sum_{j=\sigma(1)+k/2+2}^{k} 2^j}\right)\\
&=&2^{k+\sigma(r)+1}+\sum_{j=2}^r 2^{k/2+\sigma(j)+1}+ \sum_{j=k/2+1}^{\sigma(1)+k/2} 2^j +\sum_{j=2}^r \left(\sum_{i=\sigma(j)+k/2+2}^{k+\sigma(j-1)} 2^i\right)+\sum_{j=\sigma(1)+k/2+2}^k 2^j\\
&=&2^{k+\sigma(r)+1}+\sum_{j=\sigma(r)+k/2+2}^{k+\sigma(r-1)} 2^j+\left(\sum_{j=3}^r 2^{k/2+\sigma(j)+1}+\sum_{j=2}^{r-1}\left(\sum_{i=\sigma(j)+k/2+2}^{k+\sigma(j-1)} 2^i \right) \right)\\
&+&2^{k/2+\sigma(2)+1}+\sum_{j=k/2+1}^{\sigma(1)+k/2} 2^j+\sum_{j=\sigma(1)+k/2+2}^k 2^j\\
&=&2^{k+\sigma(r)+1}+\textcolor{blue}{\sum_{j=\sigma(r)+k/2+2}^{k+\sigma(r-1)} 2^j}+\left(\sum_{j=3}^r 2^{k/2+\sigma(j)+1}+\sum_{j=2}^{r-1}\left(\sum_{i=\sigma(j)+k/2+2}^{k+\sigma(j-1)} 2^i \right) \right)\\
&+&\textcolor{blue}{2^{k+1}}+\sum_{j=k/2+1}^{\sigma(1)+k/2} 2^j+\sum_{j=\sigma(1)+k/2+2}^{k/2+\sigma(2)} 2^j\\
&=&2^{k+\sigma(r)+1}+2^{k+\sigma(r-1)+1}+\sum_{j=\sigma(r)+k/2+2}^{k} 2^j+\left(\sum_{j=3}^r 2^{k/2+\sigma(j)+1}+\sum_{j=2}^{r-1}\left(\sum_{i=\sigma(j)+k/2+2}^{k+\sigma(j-1)} 2^i \right) \right)\\
&+&\sum_{j=k/2+1}^{\sigma(1)+k/2} 2^j+\sum_{j=\sigma(1)+k/2+2}^{k/2+\sigma(2)} 2^j.
\end{eqnarray*}
Now,
\begin{eqnarray*}
&&\sum_{j=3}^r 2^{k/2+\sigma(j)+1}+\sum_{j=2}^{r-1}\left(\sum_{i=\sigma(j)+k/2+2}^{k+\sigma(j-1)} 2^i \right)\\
&=&\sum_{j=3}^r 2^{k/2+\sigma(j)+1}+\sum_{j=3}^{r}\left(\sum_{i=\sigma(j-1)+k/2+2}^{k+\sigma(j-2)} 2^i \right)\\
&=&\sum_{j=3}^r\left(2^{k+\sigma(j-2)+1}+\sum_{i=\sigma(j-1)+k/2+2}^{k/2+\sigma(j)} 2^i \right),
\end{eqnarray*}
hence, we arrive at the expression
\begin{eqnarray*}
\sum_{j=1}^r 2^{k+\sigma(j)+1}+\sum_{j=\sigma(r)+k/2+2}^{k} 2^j+\sum_{j=2}^r\left(\sum_{i=\sigma(j-1)+k/2+2}^{k/2+\sigma(j)} 2^i \right)+\sum_{j=k/2+1}^{\sigma(1)+k/2} 2^j.
\end{eqnarray*}
Therefore,
\begin{eqnarray*}
&&\left| \left(\left(\sum_{j=k/2+1}^k 2^j\right)\left(\sum_{j=1}^r 2^{\sigma(j)}+1 \right)\right)_2^1\right|\\
&=&r+(k-(\sigma(r)+k/2+2)+1)+\sum_{j=2}^r (k/2+\sigma(j)-(\sigma(j-1)+k/2+2)+1)\\
&+&(\sigma(1)+k/2-(k/2+1)+1\\
&=&r+k/2-\sigma(r)-1+\sum_{j=2}^r (\sigma(j)-\sigma(j-1))-(r-1)+\sigma(1)\\
&=&k/2-\sigma(r)+(\sigma(r)-\sigma(1))+\sigma(1)=k/2.
\end{eqnarray*}
We now turn our attention to the second summand in \eqref{eq:gapcount}. Since
\[
\sum_{j=1}^r 2^{\sigma(j)}+1\leq \sum_{j=0}^{\lfloor \log_2 \ell \rfloor} 2^{j}+1\leq2^{\lfloor \log_2 \ell \rfloor +1},
\]
we see that
\[
\left(\sum_{j=1}^r 2^{\sigma(j)}\right)\left(\sum_{j=1}^r 2^{\sigma(j)}+1 \right) \leq \sum_{j=1}^r 2^{\sigma(j)+\lfloor \log_2 \ell \rfloor+1}<2^{2(\lfloor \log_2 \ell \rfloor+1)}.
\]
Consequently,
\begin{eqnarray*}
1 &\leq& \left|\left(\left(\sum_{j=1}^r 2^{\sigma(j)}\right)\left(\sum_{j=1}^r 2^{\sigma(j)}+1 \right)\right)_2^1 \right| \\
&\leq& 2 \lfloor \log_2 \ell \rfloor+1 \leq 2(\log_2 k -\log_2 4)+1=2 \log_2 k-3.
\end{eqnarray*}
Recall that $k \in T$. If we write $k=\frac{s(s+1)}{2}$ for some $s \in \mathbb{N}$, then the smallest triangular number greater than $k$ is $\frac{s(s+1)}{2}+(s+1)$. On the other hand, 
\begin{eqnarray*}
2 \log_2k-3&=&2 \log_2\left(\frac{s(s+1)}{2} \right)-3=2(\log_2 s + \log_2 (s+1))-5\\
&<&4 \log_2 (s+1)-5\stackrel{\star}{<}s+1,
\end{eqnarray*}
where the starred inequality is equivalent to $(s+1)^4<2^{s+6}$, which one easily establishes by induction. We conclude that if $t_n \in T$ with $n=2^k-2^{k/2}+m$ for $1 \leq m \leq  k/4 $, then 
\[
k=\frac{s(s+1)}{2} < |(t_n)_2^1| \leq \frac{k}{2}+\frac{k}{2}+2\log_2k-3 <\frac{s(s+1)}{2}+(s+1),
\] and consequently $t_n \notin VT$. The proof is complete.
\end{proof}
As a corollary to Proposition \ref{prop:gaps} we immediately obtain the following
\begin{thm}[The gap theorem for very triangular numbers]\label{thm:gaps} For any $k \in \mathbb{N}$ there exist consecutive very triangular numbers with at least $k$ triangular numbers between them.
\end{thm}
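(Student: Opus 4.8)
The plan is to read the conclusion off directly from Proposition~\ref{prop:gaps}, which for each triangular $K$ with $4 \mid K$ produces a block of $K/4$ consecutive indices whose triangular numbers all fail to be very triangular. Fixing the desired count $k \in \mathbb{N}$ from the statement, I would choose a triangular number $K \in T$ with $4 \mid K$ and $K \geq 4k$, apply the proposition to $K$, and then argue that the resulting block of $K/4 \geq k$ non-very-triangular triangular numbers must sit strictly between two \emph{consecutive} members of $VT$.

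First I would establish that triangular numbers divisible by $4$ exist and are arbitrarily large, so that the choice of $K$ is legitimate. Writing $t_s = s(s+1)/2$, one has $4 \mid t_s$ if and only if $8 \mid s(s+1)$, and since $s$ and $s+1$ are coprime the factor of $8$ must come entirely from whichever of the two is even; this happens exactly when $s \equiv 0$ or $s \equiv 7 \pmod 8$. There are infinitely many such $s$ and $t_s \to \infty$, so I may take $K = t_s$ as large as I please, in particular with $K \geq 4k$ and $K$ large enough for the hypotheses of Proposition~\ref{prop:gaps} to hold.

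Next I would apply Proposition~\ref{prop:gaps} with this $K$. Setting $n_0 = 2^K - 2^{K/2}$, the proposition gives $t_n \notin VT$ for every $n$ with $n_0 < n \leq n_0 + K/4$; these are $K/4$ consecutive triangular numbers, none of them very triangular. To convert this into a gap between consecutive very triangular numbers, I would let $v_{\mathrm{low}}$ be the largest very triangular number that is $\leq t_{n_0}$ and $v_{\mathrm{high}}$ the smallest very triangular number that is $> t_{n_0 + K/4}$. Both exist: the former because small very triangular numbers such as $21$ lie well below the block, the latter by the infinitude of $VT$ (Theorem~\ref{thm:inftyVT}). A short argument then shows $v_{\mathrm{low}}$ and $v_{\mathrm{high}}$ are consecutive in $VT$: any very triangular $t_j$ strictly between them would satisfy $n_0 < j \leq n_0 + K/4$ by the defining maximality of $v_{\mathrm{low}}$ and minimality of $v_{\mathrm{high}}$, placing $t_j$ inside the block, which contains no very triangular numbers. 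Since all of $t_{n_0+1}, \ldots, t_{n_0 + K/4}$ lie strictly between $v_{\mathrm{low}}$ and $v_{\mathrm{high}}$, these consecutive very triangular numbers have at least $K/4 \geq k$ triangular numbers between them.

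I expect no serious obstacle, since Proposition~\ref{prop:gaps} carries all of the arithmetic weight and the corollary reduces to the elementary divisibility observation together with the bracketing argument. The one point that genuinely requires care is the consecutiveness claim -- that no very triangular number hides between $v_{\mathrm{low}}$ and $v_{\mathrm{high}}$ -- which follows cleanly from how the two are chosen but should be stated explicitly rather than assumed.
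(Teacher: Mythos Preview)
Your proposal is correct and follows exactly the route the paper intends: the paper states the theorem as an immediate corollary of Proposition~\ref{prop:gaps} without writing out the details, and your argument (choosing a triangular $K$ with $4\mid K$ and $K\ge 4k$, invoking the proposition to obtain $K/4$ consecutive non-$VT$ triangular numbers, then bracketing this block between the nearest $VT$ numbers on either side) is precisely the natural way to cash that out. The divisibility check $4\mid t_s \iff s\equiv 0,7\pmod 8$ and the consecutiveness argument are both fine.
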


\section{Future work} We invite the reader to carry out investigations similar to ours, either to make our understanding of very triangular numbers more complete, or to extend these results to other sets of integers (or both!) One may start by proving Conjecture \ref{conj:no6ormore}. Using techniques similar to those presented in this paper we were able to show that if $|(n)_2^1|=6$, then $|(t_n)_2^1| \geq4$. We suspect however that this approach will not suffice in settling the conjecture for all $|(n)_2^1| \geq7$. \\
Another beautiful result yet to be proved is Conjecture \ref{conj:vtap}. As mentioned in the paper, numerical evidence is inconclusive regarding the density measure that we might use (along with Szemer\'edi's result). Nonetheless, our sense (based on numerical results) is that very triangular numbers are abundant, and hence should have positive upper density in the set of triangular numbers. Also related to arithmetic progressions is the question whether or not one can partition the set $VT$ with finitely many arithmetic progressions, and finally, whether it is possible to find a closed formula for the $n$th very triangular number. Finding such would answer many of the posed questions in one fell swoop.    \\
Finally, a possible generalization of the present work concerns subsets of the natural numbers $f: \mathbb{N} \hookrightarrow \mathbb{N}$ with the property that $s \in S \subset f(\mathbb{N})$ implies that $|(t)_2^1| \in f(\mathbb{N})$, and how the properties of $f$ effect whether or not $S$ exhibits properties analogous to those of $VT \subset T$.


\end{document}